\makeatletter
\let\ORIlabel\label
\let\ORIrefstepcounter\refstepcounter
\AddToHook{package/hyperref/before}{%
  \let\label\ORIlabel
  \let\refstepcounter\ORIrefstepcounter
}
\makeatother

\documentclass[onefignum,onetabnum]{siamart171218}

\usepackage{amsfonts}
\usepackage{graphicx}
\usepackage[caption=false,font=footnotesize]{subfig}
\usepackage{mathrsfs}
\usepackage{lmodern}
\usepackage{stmaryrd}
\usepackage{fancyvrb}
\usepackage{float}
\usepackage{enumitem}
\ifpdf
  \DeclareGraphicsExtensions{.eps,.pdf,.png,.jpg}
\else
  \DeclareGraphicsExtensions{.eps}
\fi

\newsiamremark{remark}{Remark}
\newsiamremark{hypothesis}{Hypothesis}
\crefname{hypothesis}{Hypothesis}{Hypotheses}
\newsiamthm{claim}{Claim}

\headers{Predictor-Based KS Tracking}{A. Cissé}

\title{Predictor-Based Exponential Tracking of Turbulent Solutions for the Kuramoto--Sivashinsky Equation with Input Delay}

\author{Amadou Cissé \thanks{CRAN, CNRS UMR 7039, University of Lorraine, F-57000 Metz, France 
  (\email{amadou.cisse@univ-lorraine.fr}).}}

\usepackage{amsopn}

\makeatletter
\newcommand*{\addFileDependency}[1]{
  \typeout{(#1)}
  \@addtofilelist{#1}
  \IfFileExists{#1}{}{\typeout{No file #1.}}
}
\makeatother

\ifpdf
\hypersetup{
  pdftitle={Predictor-Based Tracking of KS Trajectories},
  pdfauthor={A. Cissé}
}
\fi

\begin{document}

\maketitle

\begin{abstract}
Uniform exponential tracking is addressed for nonstationary trajectories of the nonlinear Kuramoto--Sivashinsky equation subject to a constant input delay. The reference belongs to a family of complete trajectories contained in the global attractor and need not be stationary, periodic, slowly varying, or generated by a finite-dimensional exosystem. The delayed input is represented by a first-order transport equation coupled with the fourth-order tracking-error dynamics. A predictor--backstepping transformation compensates for the temporal mismatch between command generation and actuation by mapping the augmented closed-loop system into the nominal delay-free error dynamics driven by the outgoing trace of a homogeneous transport subsystem. This subsystem vanishes after one delay interval. Uniform attractor bounds permit the feedback parameters and stability constants to be selected independently of the initial time and the reference trajectory. Global well-posedness and uniform exponential stability are established in the augmented state space. Numerical results show that, for the considered configuration, uncompensated delayed feedback amplifies the tracking error, whereas predictor compensation restores sustained decay. Predictor-consistency, finite-time-extinction, and discretization-refinement diagnostics support the numerical implementation.
\end{abstract}

\begin{keywords}
Kuramoto--Sivashinsky equation, input-delay compensation, trajectory tracking, predictor--backstepping control,  nonlinear PDEs.
\end{keywords}

\begin{AMS}
  93C20, 93D15, 93D23, 35K2
\end{AMS}

\section{Introduction}
\label{sec:introduction}

Feedback control of nonlinear partial differential equations exhibiting complex spatiotemporal dynamics becomes particularly demanding when the prescribed behavior is itself nonstationary. For many dissipative equations, however, the long-time dynamics are confined to a compact global attractor of finite fractal dimension. This finite-dimensional asymptotic structure motivates control, observation, and tracking strategies acting through finitely many dynamically relevant components. The Kuramoto--Sivashinsky (KS) equation is a canonical example: destabilizing second-order diffusion, stabilizing fourth-order dissipation, and quadratic transport generate complex dynamics within a finite-dimensional asymptotic structure
\cite{nicolaenko1985global,collet1993global,goodman1994stability,
foias1988inertial,abadie2025topology}.

Most control results for the KS equation concern equilibria or other invariant states. Distributed, finite-dimensional, reduced-order, boundary, and rapid stabilization methods have been developed in \cite{armaou2000feedback,christofides2000global,lhachemi2023local,
cerpa2010null,cerpa2015null,coron2015fredholm,cerpa2017control}.
Comparatively fewer results address prescribed nonstationary trajectories.
Exact controllability toward such trajectories was obtained in
\cite{cerpa2011local,gao2015carleman,gao2020global}, whereas semiglobal feedback stabilization toward time-dependent trajectories through finitely many localized actuators was established in \cite{rodrigues2023feedback}. These results do not incorporate a delay between command generation and its action on the plant.

The finite-dimensional character of dissipative dynamics also motivates the theory of determining modes, nodes, and functionals
\cite{foias1995determining,jolly1990approximate}, continuous data assimilation and nudging \cite{azouani2014continuous,larios2024nonlinear}, and robust or adaptive observers for nonlinear KS dynamics \cite{cisse2025hinfinity,cisse2026adaptive}. Such approaches primarily seek state reconstruction or synchronization through partial observations. The objective pursued here is a control problem: a finite-dimensional signal is physically injected into the plant to force its state to track a prescribed trajectory.

Uniform exponential tracking of complete KS trajectories contained in the global attractor was recently established through finite-dimensional spectral feedback in \cite{cisse2026uniform}. The controlled-mode dimension, feedback gain, and convergence estimates are uniform over the entire reference family, while robustness bounds yield corresponding conclusions for long-time observables. The feedback is nevertheless assumed to act instantaneously.
Actuator, transmission, and computational delays instead cause the command reaching the plant to have been generated for an earlier tracking configuration.
For a nonstationary reference, such a delay-induced misalignment can destroy the dissipative action of the nominal feedback.

Delayed control has also been studied for KS-type equations. A finite-dimensional predictor acting on the unstable modes yields exponential stabilization of the linearized KS equation subject to delayed boundary control \cite{guzman2019stabilization}. Well-posedness, dissipativity, and decay have been investigated for nonlinear KS and related equations containing boundary delays or memory terms
\cite{chentouf2021delay,chentouf2022infinite,chentouf2022finite,
chentouf2023singular,chentouf2025well}. These contributions concern stabilization toward an equilibrium or the analysis of equations containing delayed terms.
They do not address delayed-feedback tracking of arbitrary complete trajectories of the nonlinear periodic KS equation.

Predictor feedback compensates for an input delay by representing the actuator as a transport equation and generating the command according to the state predicted at the instant when that command becomes effective. This principle has led to predictor and backstepping designs for finite-dimensional systems, reaction--diffusion equations, and broader classes of infinite-dimensional systems \cite{krstic2009delay,krstic2009reaction,lhachemi2020iss,
lhachemi2021robustness,lhachemi2022predictor,vazquez2026survey}, including delay-adaptive control of reaction--advection--diffusion equations subject to delayed distributed inputs \cite{9528941}. Existing PDE designs are primarily devoted to equilibrium or set-point stabilization and do not directly resolve the present tracking problem. The KS tracking-error equation is both nonlinear and nonautonomous; its reference is an infinite-dimensional trajectory rather than the output of a finite-dimensional exosystem; and the predictor transformation must satisfy estimates uniform over an entire family of complete trajectories.

The central question is therefore whether delayed finite-dimensional actuation can preserve uniform exponential tracking of admissible nonstationary KS trajectories. To the best of the authors' knowledge, no available result combines trajectory-uniform tracking, predictor compensation of a distributed-input delay, and global well-posedness of the resulting nonlinear KS--transport closed loop.

A constant and exactly known input delay is considered. The reference may be any complete trajectory contained in the global attractor and need not be stationary, periodic, slowly varying, or generated by a finite-dimensional exosystem. As is standard in prescribed-trajectory predictor control, the reference is assumed to be available over the prediction horizon. Subject to this preview information, the controller is causal with respect to the plant state and the actuator history: it uses no future value of the controlled state.
The objective is uniform exponential tracking through delayed
finite-dimensional spectral actuation, with controller parameters and stability estimates independent of both the initial time and the selected reference.

The analysis couples the nonlinear reference-dependent tracking dynamics with the hyperbolic transport representation of the actuator delay. It must preserve the coercivity supplied by the fourth-order dissipation while controlling the quadratic convection, establish mutually inverse transformations between the original and target coordinates, and obtain bounds uniform over the reference family. The delayed problem therefore cannot be reduced to either the nominal delay-free tracking result or a linear predictor construction.

The main contributions are fourfold. First, a nonlinear predictor is constructed over the delay horizon, and its terminal state determines a finite-dimensional spectral feedback law. Second, a predictor--backstepping transformation maps the augmented closed loop into a cascade formed by the nominal stabilized tracking dynamics and a homogeneous transport subsystem. The latter acts on the tracking error through its outgoing trace and vanishes exactly after one delay interval.
Third, the direct and inverse transformations are established on the augmented state space, which yields global well-posedness of the nonlinear KS--transport closed loop for mild initial data. Fourth, uniform exponential stability is proved in the augmented state space. The controlled-mode dimension, feedback gain, transformation bounds, and stability constants are independent of the initial time and the selected complete reference trajectory.

These results extend uniform finite-dimensional tracking of nonstationary KS trajectories to delayed actuation. The predictor neither removes nor shortens the physical delay. It restores the temporal alignment between command generation and the future state on which the command acts. Numerical experiments compare the uncontrolled system, uncompensated delayed feedback, and predictor compensation. They illustrate the loss and recovery of tracking, the terminal predictor relation, and the finite-time evacuation of the transformed actuator state. The observed behavior is consistent with the mechanisms established by the stability analysis.

Section~\ref{sec:problem} formulates the delayed tracking problem and introduces the transport representation of the input delay.
Section~\ref{sec:predictor-design} constructs the predictor transformation, establishes the equivalence between the original and target systems, and proves global well-posedness and uniform exponential stability.
Section~\ref{sec:numerical-illustration} presents the numerical implementation and results. Section~\ref{sec:conclusion} concludes the paper.

\subsection*{Notation}
\label{subsec:notation}

All function spaces are real. Let \(\mathbb T_L=\mathbb R/(L\mathbb Z)\) be the one-dimensional torus of length \(L>0\), and let \(H_{\mathrm{per}}^k(\mathbb T_L)\) denote the periodic Sobolev space of order \(k\). The zero-mean spaces used throughout are
\[
H:=\dot L^2(\mathbb T_L)
 =\left\{\varphi\in L^2(\mathbb T_L):
 \int_0^L\varphi(x)\,dx=0\right\},
\quad
V:=\dot H_{\mathrm{per}}^2(\mathbb T_L)
 =H_{\mathrm{per}}^2(\mathbb T_L)\cap H.
\]
The space \(H\) is endowed with
\(
\langle\varphi,\psi\rangle_H
 :=\int_0^L\varphi(x)\psi(x)\,dx,
\)
and \(V\) with the equivalent norm
\(\|\varphi\|_V^2=\|\varphi\|_{L^2}^2+\|\varphi_{xx}\|_{L^2}^2\).
Identifying \(H\) with its dual gives the Gelfand triple
\(V\hookrightarrow H\equiv H'\hookrightarrow V'\), where
\(V'\simeq\dot H_{\mathrm{per}}^{-2}(\mathbb T_L)\). The duality pairing between \(V'\) and \(V\) is denoted by
\(\langle\cdot,\cdot\rangle_{V',V}\), and
\(
\|f\|_{V'}
 :=\sup_{\varphi\in V\setminus\{0\}}
 \frac{|\langle f,\varphi\rangle_{V',V}|}{\|\varphi\|_V}.
\)

For a Banach space \(X\), \(\|\cdot\|_X\), \(C(I;X)\), and \(L^p(I;X)\) have their usual meanings. The space of bounded linear operators from \(X\) to \(Y\) is denoted by \(\mathcal L(X,Y)\), with \(\mathcal L(X):=\mathcal L(X,X)\). For Hilbert spaces \(X\) and \(Y\), \(T^*\) denotes the adjoint of \(T\in\mathcal L(X,Y)\).
The domain, kernel, and range of \(T\) are denoted by \(D(T)\), \(\ker(T)\), and \(\operatorname{Ran}(T)\), respectively.
For a bounded self-adjoint operator \(P\in\mathcal L(X)\),
\(P\succeq\alpha I_X\) means \(\langle Px,x\rangle_X\geq\alpha\|x\|_X^2\) for every \(x\in X\).

The orthogonal projection onto the controlled Fourier subspace \(H_N\) is denoted by \(P_N\), and \(Q_N:=I_H-P_N\). For the delay state, set \(\mathcal Z:=L^2((0,D);\mathbb R^m), \;
\mathcal X:=H\times\mathcal Z,\)
\(
\|(w,z)\|_{\mathcal X}^2 :=\|w\|_H^2+\int_0^D\|z(s)\|_{\mathbb R^m}^2\,ds.
\)
Subscripts denote partial derivatives, while \(\dot w\) denotes the time derivative of an \(H\)-valued function. Spatial dependence is omitted when unambiguous, equalities in \(L^2\)-spaces hold almost everywhere, and \(C>0\) denotes a generic constant that may vary between occurrences.

\section{Problem Formulation}
\label{sec:problem}

The delayed tracking problem for the Kuramoto--Sivashinsky equation is formulated by specifying the controlled plant, the finite-dimensional actuation mechanism, the admissible reference trajectories, and the tracking objective.
A transport PDE is then introduced to represent the input delay, leading to an augmented system coupling the tracking error with the actuator delay-line state.

\subsection{Delayed Kuramoto--Sivashinsky Model and Actuation}
\label{subsec:delayed-KS-model}

Let \(\tau\geq0\) be an arbitrary initial time. Consider the
Kuramoto--Sivashinsky equation with a delayed finite-dimensional
distributed control input:
\begin{equation}
\begin{aligned}
u_t(t,x)
+u_{xxxx}(t,x)
+u_{xx}(t,x)
+u(t,x)u_x(t,x)
&=
\sum_{j=1}^{m} b_j(x)U_j(t-D),
\\
(t,x)&\in(\tau,\infty)\times\mathbb T_L.
\end{aligned}
\label{eq:delayed-KS}
\end{equation}
The periodic boundary conditions are
\begin{equation}
\partial_x^\ell u(t,0)
=
\partial_x^\ell u(t,L),
\qquad
\ell=0,1,2,3,
\qquad
t\geq\tau,
\label{eq:KS-periodic-BC}
\end{equation}
and the initial condition is
\begin{equation}
u(\tau,x)=u_\tau(x),
\qquad
x\in\mathbb T_L.
\label{eq:KS-initial-condition}
\end{equation}
Here,
\[
U(t)
=
\operatorname{col}
\bigl(U_1(t),\ldots,U_m(t)\bigr)
\in\mathbb R^m
\]
is the control signal, \(b_j\) is the spatial profile of the \(j\)-th actuation channel, and \(D>0\) is a known constant input delay. The control history at time \(\tau\) is prescribed by
\begin{equation}
U(\theta)=U_\tau(\theta),
\qquad
\theta\in[\tau-D,\tau),
\qquad
U_\tau\in
L^2\bigl((\tau-D,\tau);\mathbb R^m\bigr).
\label{eq:control-history}
\end{equation}

Define
\begin{equation}
\mathcal A\varphi
=
-\partial_x^4\varphi-\partial_x^2\varphi,
\qquad
D(\mathcal A)
=
H_{\mathrm{per}}^4(\mathbb T_L)\cap H,
\label{eq:KS-linear-operator}
\end{equation}
and
\begin{equation}
\mathcal N(\varphi)
=
-\varphi\partial_x\varphi
=
-\frac{1}{2}\partial_x(\varphi^2),
\qquad
\varphi\in H_{\mathrm{per}}^1(\mathbb T_L)\cap H.
\label{eq:KS-nonlinearity}
\end{equation}
Since \(H_{\mathrm{per}}^1(\mathbb T_L)\) is continuously embedded into \(L^\infty(\mathbb T_L)\), the mapping
\[
\mathcal N:
H_{\mathrm{per}}^1(\mathbb T_L)\cap H
\longrightarrow H
\]
is well defined. Equation~\eqref{eq:delayed-KS} can then be written as
\begin{equation}
\dot u(t)
=
\mathcal Au(t)
+
\mathcal N\bigl(u(t)\bigr)
+
BU(t-D),
\qquad
u(\tau)=u_\tau,
\label{eq:abstract-delayed-KS}
\end{equation}
where \(B\in\mathcal L(\mathbb R^m,H)\) is defined by
\begin{equation}
B\xi
=
\sum_{j=1}^{m}b_j\xi_j,
\qquad
\xi
=
\operatorname{col}(\xi_1,\ldots,\xi_m)
\in\mathbb R^m.
\label{eq:control-operator}
\end{equation}

The control acts on the first \(N\) nonzero Fourier wavenumbers,
including their cosine and sine components. More precisely, let
\(N\in\mathbb N\), set \(m=2N\), and define
\begin{equation}
\begin{aligned}
b_{2k-1}(x)
&=
\sqrt{\frac{2}{L}}
\cos\left(\frac{2\pi kx}{L}\right),
\\
b_{2k}(x)
&=
\sqrt{\frac{2}{L}}
\sin\left(\frac{2\pi kx}{L}\right),
\qquad
k=1,\ldots,N.
\end{aligned}
\label{eq:Fourier-actuators}
\end{equation}
These functions form an orthonormal family in \(H\). Hence,
\begin{equation}
B^\ast\varphi
=
\operatorname{col}
\bigl(
\langle\varphi,b_1\rangle_H,\ldots,
\langle\varphi,b_{2N}\rangle_H
\bigr),
\qquad
\varphi\in H,
\label{eq:adjoint-control-operator}
\end{equation}
and
\begin{equation}
B^\ast B=I_{\mathbb R^{2N}},
\qquad
\|B\xi\|_H=\|\xi\|_{\mathbb R^{2N}}.
\label{eq:control-operator-isometry}
\end{equation}
Thus, \(B\) is an isometric embedding whose range is
\begin{equation}
H_N
=
\operatorname{Ran}(B)
=
\operatorname{span}\{b_1,\ldots,b_{2N}\}.
\label{eq:controlled-subspace}
\end{equation}
The orthogonal projection onto \(H_N\) is consequently
\begin{equation}
P_N=BB^\ast.
\label{eq:controlled-projection}
\end{equation}

The actuator profiles are eigenfunctions of \(\mathcal A\). Indeed, setting
\begin{equation}
q_k=\frac{2\pi k}{L},
\qquad
k\geq1,
\label{eq:Fourier-wavenumbers}
\end{equation}
gives
\begin{equation}
\mathcal A b_{2k-1}
=
\lambda_k b_{2k-1},
\qquad
\mathcal A b_{2k}
=
\lambda_k b_{2k},
\qquad
\lambda_k=q_k^2-q_k^4.
\label{eq:KS-eigenvalues}
\end{equation}
Since \(\lambda_k\geq0\) if and only if \(q_k\leq1\), define
\begin{equation}
N_u
=
\max
\left(
\left\{
k\in\mathbb N:q_k\leq1
\right\}
\cup\{0\}
\right).
\label{eq:number-nonnegative-wavenumbers}
\end{equation}
The number of controlled wavenumbers is required to satisfy
\begin{equation}
N\geq N_u.
\label{eq:minimal-number-controlled-modes}
\end{equation}
Consequently, \(H_N\) contains all eigenspaces of \(\mathcal A\)
associated with nonnegative eigenvalues. This condition ensures the minimal spectral coverage of the nondecaying modes of the uncontrolled linear KS operator. It does not by itself imply uniform stabilizability of the reference-dependent linearized tracking dynamics, whose variable coefficient couples the Fourier modes. Accordingly, \(N\) will be chosen sufficiently large to satisfy the uniform stabilizability condition introduced in
Section~\ref{sec:predictor-design}.

Finally, the zero-mean space \(H\) is invariant under the controlled dynamics. Indeed, periodicity and the zero spatial mean of the actuator profiles yield
\[
\int_0^L
\left(
u_{xxxx}+u_{xx}+uu_x
\right)\,dx
=
0,
\qquad
\int_0^L b_j(x)\,dx=0.
\]
Therefore, every sufficiently regular solution satisfies
\begin{equation}
\frac{d}{dt}\int_0^L u(t,x)\,dx=0,
\qquad
t\geq\tau.
\label{eq:mean-conservation}
\end{equation}
For mild solutions, the same property follows from the invariance of \(H\) under the semigroup generated by \(\mathcal A\), together with
\[
\mathcal N(\varphi)\in H,
\qquad
\operatorname{Ran}(B)\subset H.
\]
Hence, \(u_\tau\in H\) implies that \(u(t)\in H\) throughout the maximal interval of existence.

\subsection{Transport Representation and Tracking Dynamics}
\label{subsec:augmented-tracking-dynamics}

Let \(\tau\geq0\) be the initial time introduced in Subsection~\ref{subsec:delayed-KS-model}. The input delay is represented by the transport state
\begin{equation}
z(t,s)
=
U(t+s-D),
\qquad
(t,s)\in[\tau,\infty)\times[0,D],
\label{eq:transport-state-definition}
\end{equation}
where \(U\) denotes the control signal extended to
\([\tau-D,\tau)\) by the prescribed history \(U_\tau\). Formally,
\(z\) satisfies
\begin{equation}
\begin{cases}
z_t(t,s)=z_s(t,s),
&
(t,s)\in(\tau,\infty)\times(0,D),
\\[1mm]
z(t,D)=U(t),
&
t>\tau,
\\[1mm]
z(\tau,s)=z_\tau(s),
&
s\in(0,D),
\end{cases}
\label{eq:transport-delay-system}
\end{equation}
where
\begin{equation}
z_\tau(s)
=
U_\tau(\tau+s-D),
\qquad
\text{for a.e. }s\in(0,D).
\label{eq:initial-transport-state}
\end{equation}
Thus,
\[
z_\tau\in
\mathcal Z
:=
L^2\bigl((0,D);\mathbb R^m\bigr).
\]

Set
\[
\theta=t-\tau.
\]
The characteristic representation of
\eqref{eq:transport-delay-system} is
\begin{equation}
z(t,s)
=
\begin{cases}
z_\tau(s+\theta),
&
s+\theta<D,
\\[1mm]
U(t+s-D),
&
s+\theta\geq D,
\end{cases}
\label{eq:transport-characteristic-representation}
\end{equation}
for almost every \(s\in(0,D)\). In particular, the outgoing trace
satisfies
\begin{equation}
z(t,0)
=
\begin{cases}
z_\tau(\theta)
=
U_\tau(t-D),
&
0<\theta<D,
\\[1mm]
U(t-D),
&
\theta\geq D,
\end{cases}
\label{eq:delayed-input-trace}
\end{equation}
for almost every \(t>\tau\). Equivalently,
\[
z(t,0)=U(t-D)
\]
when \(U\) denotes the extended control signal.

The delayed Kuramoto--Sivashinsky equation can consequently be written as
\begin{equation}
\dot u(t)
=
\mathcal Au(t)
+
\mathcal N\bigl(u(t)\bigr)
+
Bz(t,0),
\qquad
u(\tau)=u_\tau,
\label{eq:KS-transport-representation}
\end{equation}
coupled with~\eqref{eq:transport-delay-system}, with
\[
\bigl(u(t),z(t,\cdot)\bigr)
\in
\mathcal X
:=
H\times\mathcal Z.
\]

Let \(\mathscr A\) denote the global attractor of the uncontrolled
Kuramoto--Sivashinsky equation on \(H\). Since \(\mathscr A\) is compact
in \(H_{\mathrm{per}}^2(\mathbb T_L)\cap H\), the constant
\begin{equation}
M_{\mathscr A}
:=
\sup_{\varphi\in\mathscr A}
\|\varphi\|_{H_{\mathrm{per}}^2}
<
\infty
\label{eq:attractor-H2-bound}
\end{equation}
is well defined.

Consider an arbitrary complete reference trajectory
\[
u_{\mathrm{ref}}:\mathbb R\longrightarrow\mathscr A
\]
satisfying the uncontrolled Kuramoto--Sivashinsky equation
\begin{equation}
\dot u_{\mathrm{ref}}(t)
=
\mathcal Au_{\mathrm{ref}}(t)
+
\mathcal N\bigl(u_{\mathrm{ref}}(t)\bigr).
\label{eq:reference-dynamics}
\end{equation}
This identity is understood in the mild sense and, whenever the
trajectory has the corresponding regularity, in \(H\) for almost every
\(t\in\mathbb R\). Since
\(u_{\mathrm{ref}}(t)\in\mathscr A\) for every \(t\in\mathbb R\),
\begin{equation}
\sup_{t\in\mathbb R}
\|u_{\mathrm{ref}}(t)\|_{H_{\mathrm{per}}^2}
\leq
M_{\mathscr A}.
\label{eq:reference-uniform-H2-bound}
\end{equation}
In particular, for every \(D>0\),
\begin{equation}
\sup_{t\in\mathbb R}
\sup_{s\in[0,D]}
\|u_{\mathrm{ref}}(t+s)\|_{H_{\mathrm{per}}^2}
\leq
M_{\mathscr A}.
\label{eq:reference-segment-uniform-bound}
\end{equation}
This bound is uniform over all admissible complete reference
trajectories.

Define the tracking error by
\begin{equation}
w(t)
=
u(t)-u_{\mathrm{ref}}(t),
\qquad
w_\tau
=
u_\tau-u_{\mathrm{ref}}(\tau).
\label{eq:tracking-error-definition}
\end{equation}
Subtracting \eqref{eq:reference-dynamics} from
\eqref{eq:KS-transport-representation} gives
\begin{equation}
\dot w(t)
=
\mathcal Aw(t)
+
\mathcal L(t)w(t)
+
\mathcal R\bigl(w(t)\bigr)
+
Bz(t,0),
\qquad
w(\tau)=w_\tau,
\label{eq:abstract-tracking-error}
\end{equation}
where
\begin{equation}
\begin{aligned}
\mathcal L(t):
H_{\mathrm{per}}^1(\mathbb T_L)\cap H
&\longrightarrow H,
&
\mathcal L(t)\varphi
&=
-\partial_x
\bigl(u_{\mathrm{ref}}(t)\varphi\bigr),
\\
\mathcal R:
H_{\mathrm{per}}^1(\mathbb T_L)\cap H
&\longrightarrow H,
&
\mathcal R(\varphi)
&=
-\frac{1}{2}\partial_x(\varphi^2).
\end{aligned}
\label{eq:tracking-error-operators}
\end{equation}
Indeed,
\[
\mathcal N\bigl(u_{\mathrm{ref}}+w\bigr)
-
\mathcal N\bigl(u_{\mathrm{ref}}\bigr)
=
-\partial_x\bigl(u_{\mathrm{ref}}w\bigr)
-\frac{1}{2}\partial_x(w^2)
=
\mathcal L(t)w+\mathcal R(w).
\]
The embedding
\[
H_{\mathrm{per}}^1(\mathbb T_L)
\hookrightarrow
L^\infty(\mathbb T_L),
\]
the regularity
\[
u_{\mathrm{ref}}(t)
\in
H_{\mathrm{per}}^2(\mathbb T_L),
\]
and the vanishing spatial mean of periodic derivatives ensure that
\(\mathcal L(t)\varphi\) and \(\mathcal R(\varphi)\) belong to \(H\).

The tracking error inherits the periodic boundary conditions
\begin{equation}
\partial_x^\ell w(t,0)
=
\partial_x^\ell w(t,L),
\qquad
\ell=0,1,2,3,
\qquad
t\geq\tau.
\label{eq:tracking-error-periodic-BC}
\end{equation}
For mild solutions, the corresponding variation-of-constants formula is
\begin{equation}
\begin{aligned}
w(t)
&=
e^{(t-\tau)\mathcal A}w_\tau
\\
&\quad
+
\int_\tau^t
e^{(t-r)\mathcal A}
\left[
\mathcal L(r)w(r)
+
\mathcal R\bigl(w(r)\bigr)
+
Bz(r,0)
\right]\,dr,
\qquad
t\geq\tau,
\end{aligned}
\label{eq:tracking-error-mild-form}
\end{equation}
whenever the terms in the integral have the required regularity.

The augmented tracking dynamics are therefore
\begin{equation}
\begin{cases}
\dot w(t)
=
\mathcal Aw(t)
+
\mathcal L(t)w(t)
+
\mathcal R\bigl(w(t)\bigr)
+
Bz(t,0),
&
t>\tau,
\\[1mm]
z_t(t,s)=z_s(t,s),
&
(t,s)\in(\tau,\infty)\times(0,D),
\\[1mm]
z(t,D)=U(t),
&
t>\tau,
\\[1mm]
w(\tau)=w_\tau,
\qquad
z(\tau,\cdot)=z_\tau.
\end{cases}
\label{eq:augmented-tracking-system}
\end{equation}
The corresponding augmented state satisfies
\[
\bigl(w(t),z(t,\cdot)\bigr)
\in
\mathcal X,
\qquad
t\geq\tau.
\]

\subsection{Control Objective}
\label{subsec:control-objective}

The delayed control input is sought in the form
\begin{equation}
U(t)
=
\mathfrak K
\left(
t,
w(t),
z(t,\cdot),
u_{\mathrm{ref}}^{\,t}
\right),
\qquad
t\geq \tau,
\label{eq:general-feedback-law}
\end{equation}
where the reference segment \(u_{\mathrm{ref}}^{\,t}\) is defined by
\begin{equation}
\begin{aligned}
u_{\mathrm{ref}}^{\,t}:
[0,D]
&\longrightarrow
H_{\mathrm{per}}^2(\mathbb T_L)\cap H,
\\
u_{\mathrm{ref}}^{\,t}(s)
&=
u_{\mathrm{ref}}(t+s).
\end{aligned}
\label{eq:available-reference-segment}
\end{equation}

The segment \(u_{\mathrm{ref}}^{\,t}\) provides a preview of the
reference over \([t,t+D]\). Its availability at time \(t\) follows from the prescription of \(u_{\mathrm{ref}}\) on the whole real line. By contrast, the plant-dependent arguments of the feedback are restricted to the current augmented tracking state
\[
\bigl(w(t),z(t,\cdot)\bigr).
\]
In particular, the feedback law involves no future value of the plant state, tracking error, or delay-line state.

The feedback must satisfy the origin-consistency condition
\begin{equation}
\mathfrak K
\left(
t,
0,
0,
u_{\mathrm{ref}}^{\,t}
\right)
=
0,
\qquad
t\geq\tau,
\label{eq:feedback-origin-consistency}
\end{equation}
for every prescribed complete reference trajectory contained in
\(\mathscr A\). Consequently, \(U(t)=0\) along the zero solution of the closed-loop augmented tracking system.

The control objective is to construct \(\mathfrak K\) and constants
\[
M\geq1,
\qquad
\omega>0,
\]
such that the origin of the closed-loop augmented tracking
system~\eqref{eq:augmented-tracking-system} is globally exponentially stable, uniformly with respect to the initial time and the prescribed complete reference trajectory.

More precisely, let \(\tau\geq0\) be arbitrary and consider initial data
\[
w(\tau)=w_\tau,
\qquad
z(\tau,\cdot)=z_\tau
\]
satisfying the delay-history compatibility condition
\begin{equation}
z_\tau(s)
=
U_\tau(s-D),
\qquad
\text{for almost every }s\in(0,D).
\label{eq:initial-delay-history-consistency}
\end{equation}
The resulting closed-loop augmented system must admit a unique global mild solution
\[
\bigl(w(t),z(t,\cdot)\bigr)\in\mathcal X,
\qquad
t\geq\tau,
\]
satisfying
\begin{equation}
\left\|
\bigl(w(t),z(t,\cdot)\bigr)
\right\|_{\mathcal X}
\leq
M e^{-\omega(t-\tau)}
\left\|
\bigl(w_\tau,z_\tau\bigr)
\right\|_{\mathcal X},
\qquad
t\geq\tau.
\label{eq:uniform-global-exponential-stability-objective}
\end{equation}
The constants \(M\) and \(\omega\) may depend on the system parameters, the delay \(D\), the actuator configuration, and the uniform attractor bound \(M_{\mathscr A}\), but remain independent of \(\tau\) and the selected complete reference trajectory.

Since
\[
\|w(t)\|_H
\leq
\left\|
\bigl(w(t),z(t,\cdot)\bigr)
\right\|_{\mathcal X},
\]
estimate~\eqref{eq:uniform-global-exponential-stability-objective}
implies
\begin{equation}
\|u(t)-u_{\mathrm{ref}}(t)\|_H
\leq
M e^{-\omega(t-\tau)}
\left\|
\bigl(w_\tau,z_\tau\bigr)
\right\|_{\mathcal X},
\qquad
t\geq\tau.
\label{eq:uniform-exponential-tracking-objective}
\end{equation}
Hence, uniform global exponential stability of the augmented tracking origin yields global exponential tracking, with a decay rate uniform over all initial times and all prescribed complete reference trajectories contained in \(\mathscr A\).

The feedback construction proceeds in two stages. A nominal feedback is first designed for the delay-free reference-dependent tracking dynamics.
A nonlinear predictor transformation is then introduced to compensate for the actuator delay.

\section{Predictor--Backstepping Design}
\label{sec:predictor-design}

The feedback law for the delayed tracking
system~\eqref{eq:augmented-tracking-system} is constructed below. A finite-dimensional feedback is first designed for the delay-free tracking dynamics. The resulting nominal system defines a nonlinear predictor over the delay horizon. A backstepping transformation will then convert the delayed closed-loop system into an exponentially stable cascade.

\subsection{Nominal Feedback and Uniform Stability}
\label{subsec:nominal-feedback}

For \(k\geq1\), set
\begin{equation}
q_k=\frac{2\pi k}{L},
\qquad
a_k=q_k^4-q_k^2.
\label{eq:positive-KS-spectral-coefficients}
\end{equation}
By~\eqref{eq:KS-eigenvalues},
\[
\mathcal A b_{2k-1}=-a_kb_{2k-1},
\qquad
\mathcal A b_{2k}=-a_kb_{2k},
\]
and \(a_k\to+\infty\) as \(k\to\infty\).

The attractor bound~\eqref{eq:reference-uniform-H2-bound} and the
embedding
\(H_{\mathrm{per}}^1(\mathbb T_L)\hookrightarrow
L^\infty(\mathbb T_L)\) imply
\begin{equation}
\frac12
\|\partial_xu_{\mathrm{ref}}(t)\|_{L^\infty}
\leq
C_{\mathscr A}
:=
\frac12 C_{\mathrm{emb}}M_{\mathscr A},
\qquad
t\in\mathbb R,
\label{eq:reference-derivative-uniform-bound}
\end{equation}
uniformly over all complete reference trajectories in \(\mathscr A\).

Choose \(N\geq N_u\) and \(\mu>0\) such that
\begin{equation}
\underline a_{N,\mu}
:=
\min
\left\{
\min_{1\leq k\leq N}(a_k+\mu),
\inf_{k\geq N+1}a_k
\right\}
>
C_{\mathscr A},
\qquad
\eta
:=
\underline a_{N,\mu}-C_{\mathscr A}>0.
\label{eq:nominal-spectral-condition}
\end{equation}
The divergence \(a_k\to+\infty\) guarantees the existence of such a choice:
one first selects \(N\) so that the uncontrolled tail lies above
\(C_{\mathscr A}\), and then chooses \(\mu\) so that the same property holds for the first \(N\) modes.

Define
\begin{equation}
\kappa(\varphi)
=
-\mu B^\ast\varphi,
\qquad
\varphi\in H.
\label{eq:nominal-feedback-map}
\end{equation}
Since \(BB^\ast=P_N\),
\begin{equation}
B\kappa(\varphi)=-\mu P_N\varphi,
\qquad
\|\kappa(\varphi)\|_{\mathbb R^m}
\leq
\mu\|B^\ast\|_{\mathcal L(H,\mathbb R^m)}
\|\varphi\|_H.
\label{eq:nominal-feedback-properties}
\end{equation}

For \(\tau\geq0\), consider
\begin{equation}
\dot q
=
\mathcal Aq
+
\mathcal L(t)q
+
\mathcal R(q)
-
\mu P_Nq
+
d,
\qquad
q(\tau)=q_\tau,
\label{eq:nominal-closed-loop-system}
\end{equation}
where
\[
q_\tau\in H,
\qquad
d\in L_{\mathrm{loc}}^2([\tau,\infty);H).
\]
Let
\[
V=H_{\mathrm{per}}^2(\mathbb T_L)\cap H,
\qquad
V\hookrightarrow H\hookrightarrow V'.
\]
Equation~\eqref{eq:nominal-closed-loop-system} is understood in \(V'\) almost everywhere and in the mild sense in \(H\).

\begin{proposition}
\label{prop:nominal-uniform-ISS}
Let \(N\) and \(\mu\) satisfy
\eqref{eq:nominal-spectral-condition}. Then
\eqref{eq:nominal-closed-loop-system} admits a unique global solution
\[
q\in
C([\tau,\infty);H)
\cap
L_{\mathrm{loc}}^2([\tau,\infty);V),
\qquad
\dot q\in
L_{\mathrm{loc}}^2([\tau,\infty);V').
\]
Moreover,
\begin{equation}
\|q(t)\|_H^2
\leq
e^{-\eta(t-\tau)}\|q_\tau\|_H^2
+
\frac1\eta
\int_\tau^t
e^{-\eta(t-\sigma)}
\|d(\sigma)\|_H^2\,d\sigma,
\qquad
t\geq\tau.
\label{eq:nominal-ISS-estimate}
\end{equation}
The estimate holds uniformly with respect to \(\tau\) and the selected complete reference trajectory.
\end{proposition}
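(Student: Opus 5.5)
The plan is an energy estimate in \(H\), combined with a standard Galerkin/monotonicity argument for existence and uniqueness.

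\textbf{Energy identity.} Take the \(V'\)--\(V\) pairing of \eqref{eq:nominal-closed-loop-system} with \(q\). By the Lions--Magenes lemma, \(\frac12\frac{d}{dt}\|q\|_H^2=\langle\dot q,q\rangle_{V',V}\). We then treat each term.

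\begin{itemize}
\item The cubic term vanishes: periodicity gives \(\langle\mathcal R(q),q\rangle=-\frac12\int_0^L\partial_x(q^2)q\,dx=\frac12\int_0^L q^2q_x\,dx=0\).
\item The linear convection term is handled by integration by parts, \(\langle\mathcal L(t)q,q\rangle=\int_0^L u_{\mathrm{ref}}q q_x\,dx=-\frac12\int_0^L\partial_xu_{\mathrm{ref}}\,q^2\,dx\). By \eqref{eq:reference-derivative-uniform-bound}, its absolute value is at most \(C_{\mathscr A}\|q\|_H^2\).
\item The spectral decomposition along \(\{b_j\}\) and the higher Fourier modes gives \(\langle(\mathcal A-\mu P_N)q,q\rangle=-\sum_{k\le N}(a_k+\mu)|\hat q_k|^2-\sum_{k>N}a_k|\hat q_k|^2\le-\underline a_{N,\mu}\|q\|_H^2\).
\end{itemize}

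Hence
\[
\tfrac12\tfrac{d}{dt}\|q\|_H^2\le-\eta\|q\|_H^2+\|d\|_H\|q\|_H\le-\tfrac{\eta}{2}\|q\|_H^2+\tfrac{1}{2\eta}\|d\|_H^2,
\]
by Young's inequality. This gives \(\frac{d}{dt}\|q\|_H^2\le-\eta\|q\|_H^2+\frac1\eta\|d\|_H^2\), and Gronwall's lemma yields \eqref{eq:nominal-ISS-estimate}. Every constant used depends only on \(N\), \(\mu\) and \(C_{\mathscr A}\), so the estimate is uniform in \(\tau\) and in the reference trajectory.

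\textbf{Existence.} Use Faedo--Galerkin approximations on the first \(n\) Fourier modes; these are eigenfunctions of \(\mathcal A\) and are invariant under \(P_N\) for \(n\ge N\). The finite-dimensional ODEs are locally Lipschitz, and the energy bound above prevents blow-up, so the approximations are global. To bound them in \(L^2_{\mathrm{loc}}(V)\), keep part of the dissipation. The identity \(\langle -\mathcal A q,q\rangle=\|q_{xx}\|^2-\|q_x\|^2\), together with the interpolation \(\|q_x\|^2\le\|q\|\|q_{xx}\|\le\frac12\|q_{xx}\|^2+\frac12\|q\|^2\), gives \(\langle \mathcal A q,q\rangle\le-\frac12\|q_{xx}\|^2+C\|q\|^2\). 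This produces a uniform bound in \(L^\infty(H)\cap L^2_{\mathrm{loc}}(V)\).

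For the time derivative we need the nonlinearity in \(L^2_{\mathrm{loc}}(V')\) (in fact in \(L^{4/3}\) at least). In one dimension, \(\|\partial_x(q^2)\|_{V'}\le\|q^2\|_{L^1}\)-type duality bounds suffice: for instance, \(|\langle q q_x,\varphi\rangle|=\frac12|\int q^2\varphi_x|\le\frac12\|q\|_H^2\|\varphi_x\|_{L^\infty}\le C\|q\|_H^2\|\varphi\|_V\). This makes \(\mathcal R(q)\) bounded in \(L^\infty(V')\). The terms \(\mathcal Aq\) and \(\mathcal L(t)q\) are bounded in \(L^2(V')\), so \(\dot q\) is bounded in \(L^2_{\mathrm{loc}}(V')\).

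Aubin--Lions compactness then gives strong convergence in \(L^2_{\mathrm{loc}}(H)\), which suffices to pass to the limit in the quadratic term. Continuity \(q\in C([\tau,\infty);H)\) follows from the Lions--Magenes lemma, and the energy estimate passes to the limit by lower semicontinuity or by repeating the argument on the limit. The mild formulation is then equivalent to the variational one, via the analytic semigroup.

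\textbf{Uniqueness and main obstacle.} Take two solutions and set \(r=q_1-q_2\). Test the difference equation with \(r\). The nonlinear difference is \(-\frac12\partial_x((q_1+q_2)r)\), and \(\langle\partial_x((q_1+q_2)r),r\rangle=-\frac12\int\partial_x(q_1+q_2)r^2\). This is controlled by \(\|q_1+q_2\|_{H^1}\|r\|_{L^4}^2\le C\|q_1+q_2\|_{V}\|r\|_H^{3/2}\|r\|_{V}^{1/2}\). Absorb \(\|r\|_V\) into the dissipation via Young's inequality, leaving a coefficient proportional to \(\|q_1+q_2\|_V^{4/3}\), which lies in \(L^1_{\mathrm{loc}}\). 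Gronwall's lemma then gives \(r\equiv0\).

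I expect this uniqueness step, together with the low-regularity justification of the energy identity for \(H\)-valued data, to be the main technical point. The stability estimate itself reduces to the cancellation \(\langle\mathcal R(q),q\rangle=0\) and the spectral gap \eqref{eq:nominal-spectral-condition}.
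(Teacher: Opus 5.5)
Your proposal is correct and follows essentially the same route as the paper: the $H$-energy estimate from the cancellation $\langle\mathcal R(q),q\rangle_H=0$, the bound $C_{\mathscr A}\|q\|_H^2$ on the convection term, the spectral gap and Young's inequality, then Fourier--Galerkin approximation with $L^2(V)$ and $V'$ bounds, Aubin--Lions compactness, and a Gronwall difference estimate for uniqueness. The differences are cosmetic. You derive the estimate on the limit solution via Lions--Magenes rather than at the Galerkin level, and your uniqueness coefficient is $\|q_1+q_2\|_V^{4/3}$ rather than $1+\|q_1\|_V^2+\|q_2\|_V^2$. You also have a harmless sign slip, since in fact $\langle\partial_x((q_1+q_2)r),r\rangle_H=+\frac12\int_0^L\partial_x(q_1+q_2)\,r^2\,dx$, but you only use its absolute value.
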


\begin{proof}
Let \(q_n\) be the Fourier--Galerkin approximation initialized by
\(q_n(\tau)=\Pi_nq_\tau\). Periodicity gives
\[
\langle\mathcal R(q_n),q_n\rangle_H=0
\]
and
\[
\langle\mathcal L(t)q_n,q_n\rangle_H
=
-\frac12
\int_0^L
\partial_xu_{\mathrm{ref}}(t,x)q_n^2(x)\,dx
\leq
C_{\mathscr A}\|q_n\|_H^2.
\]
The Fourier representation and
\eqref{eq:nominal-spectral-condition} yield
\[
-\langle\mathcal Aq_n,q_n\rangle_H
+
\mu\|P_Nq_n\|_H^2
\geq
\underline a_{N,\mu}\|q_n\|_H^2.
\]
Consequently,
\[
\frac12\frac{d}{dt}\|q_n\|_H^2
+
\eta\|q_n\|_H^2
\leq
\langle d,q_n\rangle_H.
\]
Young's inequality gives
\begin{equation}
\frac{d}{dt}\|q_n\|_H^2
+
\eta\|q_n\|_H^2
\leq
\frac1\eta\|d\|_H^2,
\label{eq:nominal-energy-differential-inequality}
\end{equation}
and integration over \([\tau,t]\) yields
\begin{equation}
\|q_n(t)\|_H^2
\leq
e^{-\eta(t-\tau)}
\|\Pi_nq_\tau\|_H^2
+
\frac1\eta
\int_\tau^t
e^{-\eta(t-\sigma)}
\|d(\sigma)\|_H^2\,d\sigma.
\label{eq:nominal-Galerkin-ISS-estimate}
\end{equation}

For \(\varphi\in V\),
\[
-\langle\mathcal A\varphi,\varphi\rangle_H
=
\|\varphi_{xx}\|_{L^2}^2
-
\|\varphi_x\|_{L^2}^2
\geq
\frac12\|\varphi_{xx}\|_{L^2}^2
-
C_L\|\varphi\|_H^2.
\]
Together with~\eqref{eq:nominal-Galerkin-ISS-estimate}, this provides bounds independent of \(n\) in
\[
L^\infty((\tau,T);H)\cap L^2((\tau,T);V)
\]
for every \(T>\tau\). Moreover,
\[
\|\mathcal L(t)q_n\|_{V'}
\leq
CM_{\mathscr A}\|q_n\|_H,
\qquad
\|\mathcal R(q_n)\|_{V'}
\leq
C\|q_n\|_H^2,
\]
so that \((\dot q_n)\) is bounded in \(L^2((\tau,T);V')\).
Compactness and the Aubin--Lions lemma permit passage to the quadratic term and give
\[
q\in C([\tau,T];H)\cap L^2((\tau,T);V),
\qquad
\dot q\in L^2((\tau,T);V').
\]
Weak lower semicontinuity gives
\eqref{eq:nominal-ISS-estimate}.

For two solutions \(q_1,q_2\), the difference satisfies
\[
\frac{d}{dt}\|q_1-q_2\|_H^2
\leq
C\bigl(
1+\|q_1\|_V^2+\|q_2\|_V^2
\bigr)
\|q_1-q_2\|_H^2,
\]
and uniqueness follows from Gronwall's inequality. The standard
continuation criterion for the semilinear equation, together with the uniform \(H\)-bound and the local \(L^2(V)\)-bound above, excludes a finite maximal existence time. Hence the solution is global. All constants depend only on \(M_{\mathscr A}\), the domain, and the fixed design parameters.
\end{proof}

For \(d=0\), Proposition~\ref{prop:nominal-uniform-ISS} gives
\begin{equation}
\|q(t)\|_H
\leq
e^{-\frac{\eta}{2}(t-\tau)}
\|q_\tau\|_H,
\qquad
t\geq\tau.
\label{eq:nominal-uniform-exponential-stability}
\end{equation}
Thus, the nominal origin is globally exponentially stable, uniformly
with respect to the initial time and the prescribed reference trajectory.

\subsection{Nonlinear Predictor and Delay Compensation}
\label{subsec:nonlinear-predictor}

For \(t\geq\tau\), define
\[
p(t,\cdot):[0,D]\longrightarrow H
\]
as the solution of
\begin{equation}
\begin{cases}
\displaystyle
\partial_sp(t,s)
=
\mathcal Ap(t,s)
+
\mathcal L(t+s)p(t,s)
+
\mathcal R\bigl(p(t,s)\bigr)
+
Bz(t,s),
&
s\in(0,D),
\\[1mm]
p(t,0)=w(t).
\end{cases}
\label{eq:nonlinear-predictor-equation}
\end{equation}
Equivalently,
\begin{equation}
\begin{aligned}
p(t,s)
&=
e^{s\mathcal A}w(t)
+
\int_0^s
e^{(s-\sigma)\mathcal A}
\left[
\mathcal L(t+\sigma)p(t,\sigma)
+
\mathcal R\bigl(p(t,\sigma)\bigr)
+
Bz(t,\sigma)
\right]d\sigma,
\end{aligned}
\label{eq:nonlinear-predictor-mild-form}
\end{equation}
for \(s\in[0,D]\). Thus, the predictor depends only on the current
augmented state \((w(t),z(t,\cdot))\) and the prescribed reference segment
\(u_{\mathrm{ref}}^{\,t}\).

\begin{proposition}
\label{prop:predictor-wellposedness}
For every
\[
t\geq0,
\qquad
w(t)\in H,
\qquad
z(t,\cdot)\in\mathcal Z,
\]
equation~\eqref{eq:nonlinear-predictor-equation} admits a unique solution
\[
p(t,\cdot)
\in
C([0,D];H)\cap L^2((0,D);V),
\qquad
\partial_sp(t,\cdot)\in L^2((0,D);V').
\]
There exists \(C_{\mathrm p,D}>0\), depending only on \(D\), \(L\),
\(\|B\|_{\mathcal L(\mathbb R^m,H)}\), and \(M_{\mathscr A}\), such that
\begin{equation}
\sup_{s\in[0,D]}\|p(t,s)\|_H^2
+
\int_0^D\|p(t,s)\|_V^2\,ds
\leq
C_{\mathrm p,D}
\left(
\|w(t)\|_H^2
+
\|z(t,\cdot)\|_{\mathcal Z}^2
\right).
\label{eq:predictor-global-bound}
\end{equation}
The estimate is global in \((w(t),z(t,\cdot))\) and uniform over all
complete reference trajectories in \(\mathscr A\).
\end{proposition}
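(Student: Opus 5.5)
The plan is to view \eqref{eq:nonlinear-predictor-equation} as an instance of the nominal-type system \eqref{eq:nominal-closed-loop-system}. The evolution variable is \(s\in[0,D]\), the time shift is fixed at \(t\), and the reference is \(\sigma\mapsto u_{\mathrm{ref}}(t+\sigma)\). The forcing is \(Bz(t,\cdot)\), and no feedback term is present. Precisely, set \(\tilde q(s):=p(t,s)\) and \(\tilde u_{\mathrm{ref}}(s):=u_{\mathrm{ref}}(t+s)\). By \eqref{eq:reference-segment-uniform-bound} this shifted reference still takes values in \(\mathscr A\) and obeys the uniform bound \(M_{\mathscr A}\). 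Then
\[
\partial_s\tilde q=\mathcal A\tilde q+\tilde{\mathcal L}(s)\tilde q+\mathcal R(\tilde q)-\mu P_N\tilde q+\bigl(\mu P_N\tilde q+Bz(t,s)\bigr).
\]
However, the term \(\mu P_N\tilde q\) in the forcing depends on the solution, so I will not invoke Proposition~\ref{prop:nominal-uniform-ISS} directly. Instead I will rerun its Galerkin argument without the feedback, accepting exponential growth on the finite horizon \([0,D]\). This growth is harmless because \(D\) is fixed.

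For the energy estimate, I take Fourier--Galerkin approximations \(p_n\) with \(p_n(0)=\Pi_nw(t)\). Testing with \(p_n\) and using \(\langle\mathcal R(p_n),p_n\rangle_H=0\) and \(\langle\tilde{\mathcal L}(s)p_n,p_n\rangle_H\le C_{\mathscr A}\|p_n\|_H^2\) gives the key inequality. The coercivity bound \(-\langle\mathcal A\varphi,\varphi\rangle_H\ge\frac12\|\varphi_{xx}\|^2_{L^2}-C_L\|\varphi\|_H^2\), together with Young's inequality and \(\|B\xi\|_H=\|\xi\|\) from \eqref{eq:control-operator-isometry}, yields
\[
\frac{d}{ds}\|p_n\|_H^2+\tfrac12\|\partial_x^2p_n\|_{L^2}^2\le 2(C_L+C_{\mathscr A}+\tfrac12)\|p_n\|_H^2+\|z(t,s)\|^2 .
\]
Set \(\gamma:=2(C_L+C_{\mathscr A}+\frac12)\). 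Gronwall's inequality on \([0,D]\) gives \(\sup_s\|p_n\|_H^2\le e^{\gamma D}(\|w(t)\|_H^2+\|z(t,\cdot)\|_{\mathcal Z}^2)\). Integrating the dissipative term then bounds \(\int_0^D\|p_n\|_V^2\,ds\) by \((1+\gamma D)e^{\gamma D}+1\) times the same quantity, up to a constant from the norm equivalence on \(V\). This constant \(C_{\mathrm p,D}\) depends only on \(D,L,\|B\|\) and \(M_{\mathscr A}\). It is independent of \(t\) and of the reference, because only the uniform bound \eqref{eq:reference-derivative-uniform-bound} entered. Crucially, the estimate is linear in the data despite the quadratic term, since \(\mathcal R\) drops out of the energy identity; this is what makes the bound global.

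For existence and uniqueness, I will proceed exactly as in the proof of Proposition~\ref{prop:nominal-uniform-ISS}. The bounds \(\|\tilde{\mathcal L}(s)p_n\|_{V'}\le CM_{\mathscr A}\|p_n\|_H\) and \(\|\mathcal R(p_n)\|_{V'}\le C\|p_n\|_H^2\) bound \(\partial_sp_n\) in \(L^2((0,D);V')\); the forcing \(Bz(t,\cdot)\in L^2((0,D);H)\) is controlled the same way. Aubin--Lions compactness then passes to the limit in the quadratic term. The Lions--Magenes lemma gives \(p\in C([0,D];H)\), and weak lower semicontinuity transfers \eqref{eq:predictor-global-bound} to the limit. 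Uniqueness follows from the difference inequality \(\frac{d}{ds}\|p_1-p_2\|_H^2\le C(1+\|p_1\|_V^2+\|p_2\|_V^2)\|p_1-p_2\|_H^2\) and Gronwall's inequality, the weight being integrable by the \(L^2(V)\) bound. Finally, I check that the weak solution coincides with the mild form \eqref{eq:nonlinear-predictor-mild-form}, which is standard since all terms lie in \(L^2((0,D);V')\).

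The only delicate point I anticipate is the uniqueness estimate for the quadratic term. Writing \(e=p_1-p_2\), one has \(\langle\mathcal R(p_1)-\mathcal R(p_2),e\rangle_H=\frac12\int(p_1+p_2)e\,e_x\,dx\). This is bounded by \(\|p_1+p_2\|_{L^\infty}\|e\|_H\|e_x\|_{L^2}\). Interpolation, \(\|e_x\|^2\le\|e\|_H\|e_{xx}\|\), then lets me absorb the derivative into the \(\frac12\|e_{xx}\|^2\) dissipation, which produces the weight \(\|p_i\|_V^2\) above.
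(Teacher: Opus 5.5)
Your proposal is correct and follows essentially the same route as the paper. Both use a Fourier--Galerkin energy estimate in which \(\mathcal R\) drops out and \(\mathcal L(t+s)\) is controlled by the uniform constant \(C_{\mathscr A}\), then Gronwall on the finite horizon \([0,D]\), \(V'\)-bounds on \(\partial_s p_n\) with Aubin--Lions compactness, and uniqueness by the difference estimate and Gronwall; your extra details (rejecting the direct use of the nominal ISS estimate, the interpolation in the uniqueness step, Lions--Magenes continuity, and identification with the mild form) only make explicit what the paper leaves implicit.
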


\begin{proof}
Let \(p_n\) denote the Fourier--Galerkin approximation. Periodicity and \eqref{eq:reference-derivative-uniform-bound} give
\[
\langle\mathcal R(p_n),p_n\rangle_H=0,
\qquad
\langle\mathcal L(t+s)p_n,p_n\rangle_H
\leq
C_{\mathscr A}\|p_n\|_H^2.
\]
Hence,
\[
\frac12\frac{d}{ds}\|p_n\|_H^2
+
\|(p_n)_{xx}\|_{L^2}^2
-
\|(p_n)_x\|_{L^2}^2
\leq
C_{\mathscr A}\|p_n\|_H^2
+
\langle Bz,p_n\rangle_H.
\]
Interpolation and Young's inequality imply
\begin{equation}
\frac{d}{ds}\|p_n\|_H^2
+
\|(p_n)_{xx}\|_{L^2}^2
\leq
C\|p_n\|_H^2
+
C\|z(t,s)\|_{\mathbb R^m}^2,
\label{eq:predictor-energy-inequality}
\end{equation}
where \(C\) is independent of \(t\), \(n\), and the reference trajectory.
Gronwall's inequality and integration over \((0,D)\) give the bound
\eqref{eq:predictor-global-bound} at the Galerkin level.

The estimates
\[
\|\mathcal L(t+s)p_n\|_{V'}
\leq
CM_{\mathscr A}\|p_n\|_H,
\qquad
\|\mathcal R(p_n)\|_{V'}
\leq
C\|p_n\|_H^2
\]
also bound \((\partial_sp_n)\) in \(L^2((0,D);V')\).
The Aubin--Lions lemma permits passage to the nonlinear term and gives the asserted solution and estimate. Uniqueness follows from the corresponding difference estimate and Gronwall's inequality.
\end{proof}

Define the causal predictor feedback by
\begin{equation}
U(t)
=
\kappa\bigl(p(t,D)\bigr)
=
-\mu B^\ast p(t,D),
\qquad
t\geq\tau.
\label{eq:predictor-feedback-law}
\end{equation}
The inflow boundary condition of the delay line consequently becomes
\begin{equation}
z(t,D)=-\mu B^\ast p(t,D).
\label{eq:closed-loop-delay-boundary}
\end{equation}

\begin{proposition}
\label{prop:predictor-identity}
Let \((w,z)\) be a closed-loop mild solution on its maximal existence interval \([\tau,T_{\max})\). For every
\(t\in[\tau,T_{\max})\),
\begin{equation}
p(t,s)=w(t+s),
\qquad
0\leq s\leq
\min\{D,T_{\max}-t\}.
\label{eq:predictor-identity}
\end{equation}
In particular, if \(t+D<T_{\max}\), then
\begin{equation}
p(t,D)=w(t+D).
\label{eq:terminal-predictor-identity}
\end{equation}
The identities hold in \(H\), and pointwise for classical solutions.
\end{proposition}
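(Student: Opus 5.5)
Fix \(t\in[\tau,T_{\max})\) and set \(s^\ast:=\min\{D,T_{\max}-t\}\). The plan is to show that \(\tilde p(s):=w(t+s)\), for \(s\in[0,s^\ast)\), solves the predictor problem~\eqref{eq:nonlinear-predictor-equation} with the same data, and then conclude by uniqueness. The starting point is the mild formulation~\eqref{eq:tracking-error-mild-form} of the closed loop. Using the semigroup property, I restart it at time \(t\) and change variables \(r=t+\sigma\):
\[
w(t+s)=e^{s\mathcal A}w(t)+\int_0^s e^{(s-\sigma)\mathcal A}\Bigl[\mathcal L(t+\sigma)w(t+\sigma)+\mathcal R\bigl(w(t+\sigma)\bigr)+Bz(t+\sigma,0)\Bigr]d\sigma .
\]

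The key step is to identify the forcing. I need
\[
z(t+\sigma,0)=z(t,\sigma)\quad\text{for a.e. }\sigma\in(0,s^\ast).
\]
This follows from the characteristic representation~\eqref{eq:transport-characteristic-representation}, applied with initial time \(t\) instead of \(\tau\); equivalently, it follows from the identity \(z(t',s')=U(t'+s'-D)\) for the extended control signal. Indeed, both sides equal \(U(t+\sigma-D)\), which is the extended signal. This holds whether \(t+\sigma-D\) lies in the history interval \([\tau-D,\tau)\) or in the closed-loop interval, where \(U\) is given by~\eqref{eq:predictor-feedback-law}. Some care is needed because these are \(L^2\) identities in \(\sigma\). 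I would state the transport semigroup property \(z(t+h,\cdot)=\) shifted \(z(t,\cdot)\) concatenated with \(U\) on \([t,t+h]\), and read off the trace for a.e. \(\sigma\) via Fubini.

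Substituting this into the restarted formula shows that \(\tilde p\) satisfies the mild predictor equation~\eqref{eq:nonlinear-predictor-mild-form} on \([0,s^\ast)\), with \(\tilde p(0)=w(t)\). Both \(\tilde p\) and \(p(t,\cdot)\) lie in \(C([0,s']; H)\cap L^2((0,s');V)\) for every \(s'<s^\ast\). For \(\tilde p\) this is the regularity of the closed-loop mild solution; for \(p(t,\cdot)\) it is Proposition~\ref{prop:predictor-wellposedness}. Their difference \(e\) then satisfies a linear equation driven by \(\mathcal L(t+\sigma)e+\mathcal R(\tilde p)-\mathcal R(p)\). The energy estimate from the uniqueness part of Proposition~\ref{prop:predictor-wellposedness} gives
\[
\frac{d}{ds}\|e\|_H^2\le C\bigl(1+\|\tilde p\|_V^2+\|p\|_V^2\bigr)\|e\|_H^2,
\]
with an integrable coefficient, and Gronwall's inequality yields \(e\equiv0\) on \([0,s^\ast)\). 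Continuity in \(H\) extends the identity to \(s=s^\ast\) whenever \(t+s^\ast<T_{\max}\). In particular, for \(t+D<T_{\max}\) this gives~\eqref{eq:terminal-predictor-identity}. For classical solutions, the identity holds pointwise by continuity.

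The main obstacle is that the closed loop is implicit: \(U\) on \([t,t+s]\) is itself defined through \(p(t',D)\). One must therefore check that no circularity enters the argument. It does not, because the predictor at time \(t\) uses only \(z(t,\cdot)\), which already encodes whatever \(U\) the closed-loop solution produced. The identity is thus a property of any pair \((w,z)\) solving~\eqref{eq:augmented-tracking-system} with the inflow \(U\) treated as a given \(L^2_{\mathrm{loc}}\) signal, and the specific feedback law is never used. The other point requiring care is confirming that the mild solution \(w\) has the \(L^2(V)\) regularity needed for the Gronwall difference estimate. If that regularity is not available directly, I would obtain it from the energy estimate of Proposition~\ref{prop:nominal-uniform-ISS} with \(d=Bz(\cdot,0)+\mu P_N w\in L^2_{\mathrm{loc}}(H)\).
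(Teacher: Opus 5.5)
Your proposal is correct and follows the paper's proof. Like the paper, you restart the mild tracking equation at time $t$, use the characteristic representation to obtain $z(t+\sigma,0)=z(t,\sigma)$, and conclude $p(t,\cdot)=w(t+\cdot)$ from uniqueness of the predictor problem, with continuity in $H$ handling the endpoint. Two of your additions sharpen points the paper passes over: restricting to $[0,s^\ast)$ when $T_{\max}-t<D$, where $w(T_{\max})$ is undefined, and checking the $L^2(V)$ regularity that the Gronwall uniqueness step needs.
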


\begin{proof}
Fix \(t\in[\tau,T_{\max})\) and let
\[
D_t:=\min\{D,T_{\max}-t\}.
\]
For \(s\in[0,D_t]\), set
\[
\widehat p(s)=w(t+s).
\]
The characteristic representation of \(z_t=z_s\) gives
\begin{equation}
z(t+s,0)=z(t,s)
\qquad
\text{for almost every }s\in(0,D_t).
\label{eq:transport-shift-identity}
\end{equation}
Applying the mild tracking-error equation between \(t\) and \(t+s\)
and using~\eqref{eq:transport-shift-identity} yields
\[
\begin{aligned}
\widehat p(s)
&=
e^{s\mathcal A}w(t)
+
\int_0^s
e^{(s-\sigma)\mathcal A}
\left[
\mathcal L(t+\sigma)\widehat p(\sigma)
+
\mathcal R\bigl(\widehat p(\sigma)\bigr)
+
Bz(t,\sigma)
\right]d\sigma.
\end{aligned}
\]
Thus, \(\widehat p\) satisfies the predictor equation on \([0,D_t]\)
with \(\widehat p(0)=w(t)\). Uniqueness from
Proposition~\ref{prop:predictor-wellposedness} gives
\[
p(t,s)=\widehat p(s)=w(t+s)
\]
almost everywhere in \(s\). Continuity in \(H\) extends the equality to every \(s\in[0,D_t]\).
\end{proof}

Proposition~\ref{prop:predictor-identity} is valid on the existing
closed-loop interval and therefore does not presuppose global
well-posedness. If \(t+D<T_{\max}\), combining
\eqref{eq:predictor-feedback-law} and
\eqref{eq:terminal-predictor-identity} gives
\begin{equation}
U(t)=-\mu B^\ast w(t+D).
\label{eq:predictor-feedback-future-state-form}
\end{equation}
The relation is an identity along an existing closed-loop trajectory, rather than an operational definition involving a future plant state. Indeed, the implemented feedback remains the causal law \eqref{eq:predictor-feedback-law}, computed from
\(w(t)\), \(z(t,\cdot)\), and \(u_{\mathrm{ref}}^{\,t}\).

Since \(U(t)\) reaches the plant at time \(t+D\),
\begin{equation}
BU(t)
=
-\mu BB^\ast w(t+D)
=
-\mu P_Nw(t+D).
\label{eq:delay-compensation-identity}
\end{equation}
Hence, whenever the command reaches the plant, its action coincides
exactly with the nominal feedback in \eqref{eq:nominal-closed-loop-system}.

\subsection{Predictor Transformation and Target Dynamics}
\label{subsec:predictor-transformation}

Define the predictor transformation
\begin{equation}
r(t,s)
=
z(t,s)-\kappa\bigl(p(t,s)\bigr)
=
z(t,s)+\mu B^\ast p(t,s),
\qquad
s\in[0,D].
\label{eq:predictor-transformation}
\end{equation}
Since \(p(t,0)=w(t)\),
\begin{equation}
z(t,0)
=
r(t,0)+\kappa\bigl(w(t)\bigr).
\label{eq:delayed-input-transformed}
\end{equation}
Moreover, the feedback law \eqref{eq:predictor-feedback-law} and the boundary condition \(z(t,D)=U(t)\) give
\begin{equation}
r(t,D)=0.
\label{eq:transformed-actuator-boundary}
\end{equation}

To construct the inverse transformation, fix \(t\geq0\) and let
\((w,r)\in H\times\mathcal Z\). Define \(p^r(t,\cdot)\) as the unique solution of

\begin{equation}
\begin{cases}
\begin{aligned}
\partial_s p^r(t,s)
={}&
\mathcal A p^r(t,s)
+\mathcal L(t+s)p^r(t,s)
+\mathcal R\bigl(p^r(t,s)\bigr)
\\
&-\mu P_Np^r(t,s)
+Br(t,s),
\qquad s\in(0,D),
\end{aligned}
\\[1mm]
p^r(t,0)=w.
\end{cases}
\label{eq:inverse-predictor-equation}
\end{equation}

This is the perturbed nominal system \eqref{eq:nominal-closed-loop-system}, with prediction variable \(s\) and disturbance \(Br(t,\cdot)\). Proposition \ref{prop:nominal-uniform-ISS} therefore yields
\begin{equation}
\|p^r(t,s)\|_H^2
\leq
e^{-\eta s}\|w\|_H^2
+
\frac{\|B\|^2}{\eta}
\int_0^s
e^{-\eta(s-\sigma)}
\|r(t,\sigma)\|_{\mathbb R^m}^2\,d\sigma.
\label{eq:inverse-predictor-estimate}
\end{equation}
After integration over \((0,D)\),
\begin{equation}
\|p^r(t,\cdot)\|_{L^2((0,D);H)}^2
\leq
\frac{1-e^{-\eta D}}{\eta}\|w\|_H^2
+
\frac{\|B\|^2}{\eta^2}
\|r(t,\cdot)\|_{\mathcal Z}^2.
\label{eq:inverse-predictor-L2-estimate}
\end{equation}
The inverse transformation is consequently
\begin{equation}
z(t,s)
=
r(t,s)+\kappa\bigl(p^r(t,s)\bigr)
=
r(t,s)-\mu B^\ast p^r(t,s).
\label{eq:inverse-predictor-transformation}
\end{equation}

\begin{proposition}
\label{prop:predictor-transformation-invertibility}
For every \(t\geq0\), transformation \eqref{eq:predictor-transformation} is a bijection on \(H\times\mathcal Z\), with inverse \eqref{eq:inverse-predictor-equation}--%
\eqref{eq:inverse-predictor-transformation}. Moreover, there exist
constants \(C_{\mathrm d},C_{\mathrm i}>0\), independent of \(t\) and of the selected complete reference trajectory, such that
\begin{align}
\|w\|_H^2+\|r(t,\cdot)\|_{\mathcal Z}^2
&\leq
C_{\mathrm d}
\bigl(
\|w\|_H^2+\|z(t,\cdot)\|_{\mathcal Z}^2
\bigr),
\label{eq:direct-transformation-bound}
\\
\|w\|_H^2+\|z(t,\cdot)\|_{\mathcal Z}^2
&\leq
C_{\mathrm i}
\bigl(
\|w\|_H^2+\|r(t,\cdot)\|_{\mathcal Z}^2
\bigr).
\label{eq:inverse-transformation-bound}
\end{align}
Thus, the original and transformed state norms are globally and uniformly equivalent.
\end{proposition}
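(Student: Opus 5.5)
The plan is to treat the direct map \(\mathcal T_t:(w,z)\mapsto(w,r)\) and the candidate inverse \(\mathcal S_t:(w,r)\mapsto(w,z)\) separately. First I will check that each is well defined with the stated bound. Then I will show \(\mathcal S_t\circ\mathcal T_t=\mathrm{id}\) and \(\mathcal T_t\circ\mathcal S_t=\mathrm{id}\) on \(H\times\mathcal Z\). Both compositions reduce to uniqueness of solutions of the predictor-type equations.

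\emph{Direct bound.} Given \((w,z)\in H\times\mathcal Z\), Proposition~\ref{prop:predictor-wellposedness} gives a unique \(p(t,\cdot)\in C([0,D];H)\), so \(r(t,\cdot)=z(t,\cdot)+\mu B^\ast p(t,\cdot)\) lies in \(\mathcal Z\). Using \(\|B^\ast\|\le 1\), we get
\[
\|r\|_{\mathcal Z}^2\le 2\|z\|_{\mathcal Z}^2+2\mu^2 D\sup_{s\in[0,D]}\|p(t,s)\|_H^2 .
\]
Combining this with \eqref{eq:predictor-global-bound} yields \eqref{eq:direct-transformation-bound} with \(C_{\mathrm d}=1+2+2\mu^2DC_{\mathrm p,D}\). \emph{Inverse bound.} Given \((w,r)\), the inverse predictor \(p^r\) exists uniquely by Proposition~\ref{prop:nominal-uniform-ISS}, applied on \([0,D]\) with the time-shifted reference \(s\mapsto u_{\mathrm{ref}}(t+s)\) and disturbance \(Br(t,\cdot)\in L^2((0,D);H)\). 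The shifted reference is again a complete trajectory in \(\mathscr A\), so the constants do not change. Then \eqref{eq:inverse-predictor-L2-estimate} together with \(\|z\|_{\mathcal Z}^2\le 2\|r\|_{\mathcal Z}^2+2\mu^2\|p^r\|_{L^2((0,D);H)}^2\) yields \eqref{eq:inverse-transformation-bound}. The resulting constant involves \(\mu\), \(\eta\), \(D\) and \(\|B\|=1\) only, so it is independent of \(t\) and of the reference.

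\emph{Mutual inverses.} The key identity is the following. If \(r=z+\mu B^\ast p\), then
\[
Bz=Br-\mu BB^\ast p=Br-\mu P_Np,
\]
so the predictor equation \eqref{eq:nonlinear-predictor-equation} for \(p\) driven by \(z\) is exactly equation \eqref{eq:inverse-predictor-equation} driven by \(r\), with the same initial value \(w\). For \(\mathcal S_t\circ\mathcal T_t\): start from \((w,z)\), build \(p\) and \(r\), and note that \(p\) solves \eqref{eq:inverse-predictor-equation}. Uniqueness in Proposition~\ref{prop:nominal-uniform-ISS} gives \(p^r=p\), hence \(r-\mu B^\ast p^r=z\). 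For \(\mathcal T_t\circ\mathcal S_t\): start from \((w,r)\), build \(p^r\) and set \(z=r-\mu B^\ast p^r\). Then \(Br-\mu P_Np^r=Bz\), so \(p^r\) solves the predictor equation with input \(z\), and uniqueness in Proposition~\ref{prop:predictor-wellposedness} gives \(p=p^r\), hence \(z+\mu B^\ast p=r\). In both directions the solutions must be identified in the same class, \(C([0,D];H)\cap L^2((0,D);V)\) with derivative in \(L^2((0,D);V')\); both propositions provide exactly this class.

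I expect the main obstacle to be the justification that the two equations are literally the same weak/mild problem. This is what allows uniqueness to be applied. It requires checking that \(B r\) and \(Bz\) are in \(L^2((0,D);H)\), and that the variational formulation in \(V'\) for each is equivalent. Both follow since \(P_N\) is bounded on \(H\) and \(p\in C([0,D];H)\), so \(\mu P_Np\) can be moved between the operator and the forcing term without changing the solution concept.
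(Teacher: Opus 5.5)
Your proposal is correct and follows essentially the same route as the paper: the identity \(Bz=Br-\mu P_Np\), combined with uniqueness for \eqref{eq:nonlinear-predictor-equation} and \eqref{eq:inverse-predictor-equation}, yields both compositions. The bounds then follow from \(\|r\|_{\mathcal Z}^2\le 2\|z\|_{\mathcal Z}^2+2\mu^2\|B^\ast\|^2\|p\|_{L^2((0,D);H)}^2\) and its inverse counterpart, together with \eqref{eq:predictor-global-bound} and \eqref{eq:inverse-predictor-L2-estimate}. Your remark that both solutions must be compared in the same class \(C([0,D];H)\cap L^2((0,D);V)\) makes explicit a detail the paper leaves implicit, and your constant \(C_{\mathrm d}\) is valid though not sharp.
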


\begin{proof}
Let \((w,z)\in H\times\mathcal Z\), and let \(p\) solve
\eqref{eq:nonlinear-predictor-equation}. Substitution of
\[
z=r+\kappa(p)
\]
into that equation shows that \(p\) satisfies
\eqref{eq:inverse-predictor-equation}. Its uniqueness gives \(p=p^r\);
hence \eqref{eq:inverse-predictor-transformation} recovers \(z\).

Conversely, let \(p^r\) solve
\eqref{eq:inverse-predictor-equation} and define \(z\) by
\eqref{eq:inverse-predictor-transformation}. Since
\[
Bz
=
Br+B\kappa(p^r),
\]
the function \(p^r\) satisfies \eqref{eq:nonlinear-predictor-equation} with input \(z\).
Consequently, applying \eqref{eq:predictor-transformation} recovers \(r\), which proves bijectivity.

Finally,
\[
\|r\|_{\mathcal Z}^2
\leq
2\|z\|_{\mathcal Z}^2
+
2\mu^2\|B^\ast\|^2
\|p\|_{L^2((0,D);H)}^2.
\]
The direct estimate follows from
\eqref{eq:predictor-global-bound}. Similarly,
\[
\|z\|_{\mathcal Z}^2
\leq
2\|r\|_{\mathcal Z}^2
+
2\mu^2\|B^\ast\|^2
\|p^r\|_{L^2((0,D);H)}^2,
\]
and \eqref{eq:inverse-predictor-L2-estimate} gives the inverse estimate.
All constants depend only on the fixed design parameters, \(D\), and
\(M_{\mathscr A}\).
\end{proof}


The predictor transformation maps the delayed closed-loop system into the cascade
\begin{equation}
\begin{cases}
\displaystyle
\dot w
=
\mathcal Aw
+
\mathcal L(t)w
+
\mathcal R(w)
-
\mu P_Nw
+
Br(t,0),
\\[1mm]
\displaystyle
\partial_t r(t,s)
=
\partial_s r(t,s),
&
s\in(0,D),
\\[1mm]
r(t,D)=0.
\end{cases}
\label{eq:predictor-target-system}
\end{equation}
Thus, the tracking component is the nominal closed-loop system driven by the trace \(r(t,0)\), while the transport component is autonomous and vanishes after one delay horizon.

To verify this transformation, first consider compatible regular data and a corresponding classical solution on a common existence interval. For every \(t\in [\tau, T_{max}]\) and \(s\in[0,D]\) satisfying
\[
t+s<T_{\max},
\]
the predictor identity and the characteristic representation of the actuator state give
\begin{equation}
p(t,s)=w(t+s),
\qquad
z(t,s)=z(t+s,0).
\label{eq:predictor-characteristic-identities}
\end{equation}
Consequently,
\[
r(t,s)
=
z(t+s,0)-\kappa\bigl(w(t+s)\bigr)
=
r(t+s,0),
\]
and hence
\begin{equation}
\partial_t r(t,s)=\partial_s r(t,s).
\label{eq:transformed-transport-equation}
\end{equation}
Moreover, the feedback law gives
\[
r(t,D)
=
z(t,D)-\kappa\bigl(p(t,D)\bigr)
=
U(t)-U(t)
=
0.
\]
Since \(p(t,0)=w(t)\), one also has
\[
z(t,0)=r(t,0)+\kappa\bigl(w(t)\bigr).
\]
Substitution into the tracking equation yields
\[
\dot w
=
\mathcal Aw
+
\mathcal L(t)w
+
\mathcal R(w)
+
B\kappa(w)
+
Br(t,0).
\]
The identity
\[
B\kappa(w)=-\mu P_Nw
\]
then yields the first equation of
\eqref{eq:predictor-target-system}.

For mild solutions, these identities are understood through the
characteristic representation of the transport equation and the
variation-of-constants formula for the tracking equation.

\begin{proposition}
\label{prop:target-wellposedness}
For every \(\tau\geq0\) and every initial state
\[
(w_\tau,r_\tau)\in\mathcal X,
\qquad
\mathcal X=H\times\mathcal Z,
\]
system~\eqref{eq:predictor-target-system} admits a unique global mild solution
\[
(w,r)\in C([\tau,\infty);\mathcal X),
\]
such that
\begin{equation}
w\in L_{\mathrm{loc}}^2([\tau,\infty);V),
\qquad
\dot w\in L_{\mathrm{loc}}^2([\tau,\infty);V').
\label{eq:target-additional-regularity}
\end{equation}
The transport component is given by
\begin{equation}
r(t,s)
=
\begin{cases}
r_\tau\bigl(s+t-\tau\bigr),
&
s+t-\tau<D,
\\[1mm]
0,
&
s+t-\tau\geq D,
\end{cases}
\label{eq:target-transport-explicit-solution}
\end{equation}
for \(t\geq\tau\) and almost every \(s\in(0,D)\). In particular,
\begin{equation}
r(t,\cdot)=0
\quad\text{in }\mathcal Z,
\qquad
t\geq\tau+D.
\label{eq:transport-finite-time-extinction}
\end{equation}
\end{proposition}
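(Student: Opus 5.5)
The proof exploits the cascade structure of \eqref{eq:predictor-target-system}: the transport component does not depend on \(w\), so it is solved first and explicitly. Its outgoing trace then enters the tracking equation as a known disturbance, and Proposition~\ref{prop:nominal-uniform-ISS} applies.

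\emph{Step 1: the transport component.} The homogeneous transport equation \(r_t=r_s\) on \((0,D)\), with zero inflow \(r(t,D)=0\), generates the left-shift semigroup on \(\mathcal Z\),
\[
(S(\theta)r_\tau)(s)=r_\tau(s+\theta)\,\mathbf 1_{\{s+\theta<D\}}.
\]
This is a \(C_0\)-semigroup of contractions. Continuity of translation in \(L^2\) gives \(r\in C([\tau,\infty);\mathcal Z)\). Its mild solution is exactly \eqref{eq:target-transport-explicit-solution}. Uniqueness of the mild solution follows from semigroup theory, or directly from the characteristics. Since \(S(\theta)=0\) for \(\theta\geq D\), the extinction property \eqref{eq:transport-finite-time-extinction} is immediate.

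\emph{Step 2: the trace as a disturbance.} The outgoing trace is defined by
\[
r(t,0)=r_\tau(t-\tau)\quad\text{for } t-\tau<D,\qquad r(t,0)=0 \quad\text{for } t-\tau\geq D.
\]
This is the natural meaning for mild solutions, even though an \(L^2\) function has no pointwise trace. The admissibility identity
\[
\int_\tau^\infty\|r(t,0)\|_{\mathbb R^m}^2\,dt=\|r_\tau\|_{\mathcal Z}^2
\]
then shows that \(d(t):=Br(t,0)\) belongs to \(L^2([\tau,\infty);H)\), with \(\|d\|_{L^2(H)}\leq\|B\|\,\|r_\tau\|_{\mathcal Z}\).

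\emph{Step 3: the tracking component.} The \(w\)-equation is exactly \eqref{eq:nominal-closed-loop-system} with initial datum \(q_\tau=w_\tau\) and disturbance \(d\) from Step~2. Proposition~\ref{prop:nominal-uniform-ISS} gives a unique global solution
\[
w\in C([\tau,\infty);H)\cap L^2_{\mathrm{loc}}([\tau,\infty);V),\qquad \dot w\in L^2_{\mathrm{loc}}([\tau,\infty);V').
\]
This is the regularity \eqref{eq:target-additional-regularity}. It remains to check that this variational solution coincides with the mild solution in \(H\). I will argue this by testing the variational formulation against \(e^{(t-\sigma)\mathcal A^*}\varphi\). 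Equivalently, the Galerkin limit satisfies the variation-of-constants formula, since each Galerkin approximation does. Combining with Step~1 gives \((w,r)\in C([\tau,\infty);\mathcal X)\).

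\emph{Step 4: uniqueness of the pair.} Uniqueness of \((w,r)\) follows in two stages. The component \(r\) is unique by Step~1. For that fixed \(r\), the trace and hence \(d\) are determined, so \(w\) is unique by Proposition~\ref{prop:nominal-uniform-ISS}.

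The main obstacle is Step~2: making sense of the trace \(r(\cdot,0)\) for merely \(L^2\) data and verifying that it gives an admissible \(L^2_{\mathrm{loc}}(H)\) input. I would handle this by first proving the admissibility identity for smooth \(r_\tau\) vanishing near \(s=D\), where it is a change of variables. The identity then extends to all of \(\mathcal Z\) by density.
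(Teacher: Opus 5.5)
Your proposal is correct and follows essentially the same route as the paper. You solve the transport component explicitly through the nilpotent left-shift semigroup and read off the outgoing trace, $r(t,0)=r_\tau(t-\tau)$ for $t-\tau<D$ and $0$ afterwards, with $\int_\tau^{\tau+D}\|r(t,0)\|_{\mathbb R^m}^2\,dt=\|r_\tau\|_{\mathcal Z}^2$; you then apply Proposition~\ref{prop:nominal-uniform-ISS} to the $w$-equation with the finite-duration forcing $Br(\cdot,0)$.

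Two small remarks. The density argument you plan for the trace identity is unnecessary, since for $r_\tau\in L^2$ the identity is a direct change of variables. Your added check that the variational and mild solutions coincide is a detail the paper leaves implicit.
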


\begin{proof}
The transport equation in
\eqref{eq:predictor-target-system}, together with the inflow boundary condition \(r(t,D)=0\), generates the left-shift semigroup on \(\mathcal Z\). Its characteristic representation is
\eqref{eq:target-transport-explicit-solution}. Consequently,
\[
r\in C([\tau,\infty);\mathcal Z)
\]
and
\begin{equation}
\|r(t,\cdot)\|_{\mathcal Z}^2
=
\begin{cases}
\displaystyle
\int_{t-\tau}^{D}
\|r_\tau(\sigma)\|_{\mathbb R^m}^2\,d\sigma,
&
0\leq t-\tau<D,
\\[3mm]
0,
&
t-\tau\geq D.
\end{cases}
\label{eq:target-transport-energy}
\end{equation}
Its outgoing trace is
\begin{equation}
r(t,0)
=
\begin{cases}
r_\tau(t-\tau),
&
0<t-\tau<D,
\\[1mm]
0,
&
t-\tau\geq D,
\end{cases}
\label{eq:target-outflow-trace}
\end{equation}
for almost every \(t>\tau\). Therefore,
\[
r(\cdot,0)\in L^2((\tau,\tau+D);\mathbb R^m)
\]
and
\begin{equation}
\int_\tau^{\tau+D}
\|r(t,0)\|_{\mathbb R^m}^2\,dt
=
\|r_\tau\|_{\mathcal Z}^2.
\label{eq:target-outflow-trace-identity}
\end{equation}

It follows that
\[
Br(\cdot,0)\in L^2((\tau,\tau+D);H),
\]
and this forcing vanishes for almost every \(
t\geq\tau+D,\).
The global well-posedness result for the nominal tracking equation, applied to the finite-duration forcing \(Br(t,0)\), gives a unique global solution satisfying
\[
w\in C([\tau,\infty);H)
\cap
L_{\mathrm{loc}}^2([\tau,\infty);V),
\qquad
\dot w\in L_{\mathrm{loc}}^2([\tau,\infty);V').
\]
The transport component then completes the global well-posedness and regularity proof for the coupled system.
\end{proof}

\begin{proposition}
\label{prop:closed-loop-target-equivalence}
On every common existence interval, the predictor transformation
\eqref{eq:predictor-transformation} establishes a one-to-one correspondence between mild solutions of the delayed closed-loop system and mild solutions of the target system~\eqref{eq:predictor-target-system}. Its inverse is given
by
\eqref{eq:inverse-predictor-equation}--%
\eqref{eq:inverse-predictor-transformation}.
\end{proposition}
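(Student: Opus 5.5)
The plan is to prove the correspondence in both directions at the level of mild solutions. The tools are the pointwise-in-time bijection of Proposition~\ref{prop:predictor-transformation-invertibility}, the predictor identity of Proposition~\ref{prop:predictor-identity}, and the characteristic representations of the two transport equations. Throughout, fix a common interval \([\tau,T)\). A mild solution of the delayed closed loop means the following: \(w\) satisfies the variation-of-constants formula \eqref{eq:tracking-error-mild-form}, and \(z\) is given by the characteristic representation \eqref{eq:transport-characteristic-representation}, with \(U\) defined by \eqref{eq:predictor-feedback-law}. A mild solution of \eqref{eq:predictor-target-system} is understood analogously, with \(r\) given by \eqref{eq:target-transport-explicit-solution}.

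\emph{Direct direction.} Let \((w,z)\) be a closed-loop mild solution and set \(r(t,\cdot)=z(t,\cdot)+\mu B^\ast p(t,\cdot)\). The key step is the shift identity \(r(t,s)=r(t+s,0)\) for \(t+s<T\) and a.e.\ \(s\). This follows from Proposition~\ref{prop:predictor-identity}, which gives \(p(t,s)=w(t+s)\), combined with \(z(t,s)=z(t+s,0)\) from \eqref{eq:transport-shift-identity}. For \(t+s\ge \tau+D\), the trace \(z(t+s,0)=U(t+s-D)=-\mu B^\ast p(t+s-D,D)=-\mu B^\ast w(t+s)\), which follows from \eqref{eq:predictor-feedback-future-state-form}. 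Hence \(r(t,s)=0\) there. For \(t+s<\tau+D\), we get \(r(t,s)=r_\tau(t+s-\tau)\) with \(r_\tau:=z_\tau+\mu B^\ast p(\tau,\cdot)\). This is exactly \eqref{eq:target-transport-explicit-solution}. Substituting \(z(t,0)=r(t,0)-\mu B^\ast w(t)\) into \eqref{eq:tracking-error-mild-form} and using \(BB^\ast=P_N\) shows that \(w\) is a mild solution of the first equation of \eqref{eq:predictor-target-system}. Since \(-\mu P_N\) is bounded, moving it between the semigroup and the integral is harmless: both mild formulations characterize the same weak solution class of Proposition~\ref{prop:nominal-uniform-ISS}.

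\emph{Inverse direction.} Let \((w,r)\) be a target mild solution. Define \(p^r(t,\cdot)\) by \eqref{eq:inverse-predictor-equation} and \(z(t,\cdot)=r(t,\cdot)-\mu B^\ast p^r(t,\cdot)\). The analogue of the predictor identity is needed first: \(p^r(t,s)=w(t+s)\). This holds because \(s\mapsto w(t+s)\) solves \eqref{eq:inverse-predictor-equation} with forcing \(Br(t+s,0)=Br(t,s)\), where the equality uses the shift structure of \eqref{eq:target-transport-explicit-solution}. Uniqueness in Proposition~\ref{prop:nominal-uniform-ISS} then gives the identity. Consequently \(z(t,s)=r(t+s,0)-\mu B^\ast w(t+s)=z(t+s,0)\), so \(z\) is transported by characteristics. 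Its inflow value is \(z(t,D)=-\mu B^\ast p^r(t,D)\). The bijectivity argument of Proposition~\ref{prop:predictor-transformation-invertibility} gives \(p^r=p\), where \(p\) is built from \((w,z)\). Hence \(z(t,D)=-\mu B^\ast p(t,D)=U(t)\), which is the feedback law \eqref{eq:predictor-feedback-law}. The \(w\)-equation is recovered by reversing the substitution used in the direct direction.

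\emph{One-to-one.} Both maps are pointwise in time the mutually inverse maps of Proposition~\ref{prop:predictor-transformation-invertibility}, so the correspondence is bijective on trajectories. The estimates \eqref{eq:direct-transformation-bound}--\eqref{eq:inverse-transformation-bound} give \(C([\tau,T);\mathcal X)\) regularity in both directions, once \(t\mapsto p(t,\cdot)\in L^2((0,D);H)\) is shown to be continuous. I plan to obtain that continuity from the Gronwall difference estimate in Proposition~\ref{prop:predictor-wellposedness} applied to nearby data \((w(t),z(t,\cdot))\), together with the strong continuity of translations of \(u_{\mathrm{ref}}\) in the coefficients \(\mathcal L(t+s)\).

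\emph{Main obstacle.} I expect the main difficulty to be the window \(t+s\ge T\), where the predictor extends beyond the existence interval, and the handling of traces for merely \(L^2\) data. For the window, I will restrict all identities to \(t+s<T\), which suffices to define the trajectory correspondence on \([\tau,T)\). For the traces, I will first argue for compatible regular data, as in the formal verification above, and then pass to general data by density, using the uniform continuous dependence given by the transformation bounds.
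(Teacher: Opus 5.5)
Your proposal follows essentially the same route as the paper. The direct direction goes through the predictor identity $p(t,s)=w(t+s)$ and the characteristic shift $z(t,s)=z(t+s,0)$. The inverse direction goes through $p^r$ and uniqueness for \eqref{eq:inverse-predictor-equation}, where you are more explicit than the paper in proving $p^r(t,s)=w(t+s)$. General initial data are handled by approximation with regular data.

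One caution applies equally to the paper's own argument. The forward shift $r(t,s)=r(t+s,0)$, restricted to $t+s<T$, only identifies $r(t,\cdot)$ on $(0,T-t)$, so your claim that this restriction "suffices" fails when $t>T-D$. Use instead the backward shift $r(t,s)=r(t-h,s+h)$ with $h=\min\{t-\tau,D-s\}$. It follows from $p(t,s)=p(t-h,s+h)$, which holds by $p(t-h,h)=w(t)$, $z(t,s)=z(t-h,s+h)$, and uniqueness of the predictor. This uses $w$ only up to time $t$ and closes the gap.
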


\begin{proof}
Consider first compatible regular initial data. The characteristic argument above shows that the direct predictor transformation maps every classical solution of the delayed closed-loop system into a classical solution of \eqref{eq:predictor-target-system}.

Conversely, let \((w,r)\) be a classical solution of the target system.
For every \(t\) in its existence interval, let \(p^r(t,\cdot)\) be the solution of \eqref{eq:inverse-predictor-equation}, and define
\begin{equation}
z(t,s)
=
r(t,s)+\kappa\bigl(p^r(t,s)\bigr).
\label{eq:inverse-transformation-in-equivalence-proof}
\end{equation}
Since \(p^r(t,0)=w(t)\), one has
\[
z(t,0)
=
r(t,0)+\kappa\bigl(w(t)\bigr).
\]
The first equation of \eqref{eq:predictor-target-system} then becomes
\[
\dot w
=
\mathcal Aw
+
\mathcal L(t)w
+
\mathcal R(w)
+
Bz(t,0),
\]
which recovers the original tracking equation.

At \(s=D\), the boundary condition \(r(t,D)=0\) gives
\[
z(t,D)
=
\kappa\bigl(p^r(t,D)\bigr)
=
U(t),
\]
and hence the feedback boundary condition is also recovered. Finally, the characteristic representation of \(r\), together with uniqueness of the inverse predictor equation, implies that \(z\) satisfies
\[
\partial_t z=\partial_s z
\]
in the distributional sense. Therefore, \((w,z)\) solves the delayed
closed-loop system.

For general initial data in \(\mathcal X\), approximate the data by compatible regular data. The uniform bounds
\eqref{eq:direct-transformation-bound}--\eqref{eq:inverse-transformation-bound}, together with continuous dependence of both systems on their initial states, permit passage to the limit. The preceding argument establishes the one-to-one correspondence between the mild solutions of the original and transformed systems.
\end{proof}

After the finite-time extinction of the transport component,
\[
r(t,\cdot)=0,
\qquad
t\geq \tau+D,
\]
the tracking equation reduces exactly to
\begin{equation}
\dot w
=
\mathcal Aw
+
\mathcal L(t)w
+
\mathcal R(w)
-
\mu P_Nw,
\qquad
t\geq \tau+D.
\label{eq:post-delay-nominal-dynamics}
\end{equation}
Hence, the predictor transformation converts the delayed closed-loop system into the nominal tracking dynamics driven, only during the initial delay interval, by a transport component that vanishes in finite time.

\subsection{Exponential Stability of the Delayed Closed-Loop System}
\label{subsec:closed-loop-stability}

Let \(\tau\geq0\) and
\[
(w_\tau,z_\tau)\in\mathcal X.
\]
Denote by \(p_\tau\) the predictor state associated with
\((w_\tau,z_\tau)\) at time \(\tau\), and set
\begin{equation}
r_\tau(s)
=
z_\tau(s)-\kappa\bigl(p_\tau(s)\bigr),
\qquad
\text{for a.e. }s\in(0,D).
\label{eq:initial-transformed-state}
\end{equation}
The direct transformation estimate
\eqref{eq:direct-transformation-bound} gives
\begin{equation}
\|w_\tau\|_H^2+\|r_\tau\|_{\mathcal Z}^2
\leq
C_{\mathrm d}
\left(
\|w_\tau\|_H^2+\|z_\tau\|_{\mathcal Z}^2
\right).
\label{eq:initial-transformed-state-bound}
\end{equation}

\begin{theorem}[Uniform exponential stability]
\label{thm:delayed-closed-loop-exponential-stability}
Let \(N\in\mathbb N\) and \(\mu>0\) satisfy
\eqref{eq:nominal-spectral-condition}, and let
\[
\eta
=
\underline a_{N,\mu}-C_{\mathscr A}
>
0.
\]
Then, for every \(\tau\geq0\), every complete reference trajectory in \(\mathscr A\), and every \((w_\tau,z_\tau)\in\mathcal X\), the delayed closed-loop system governed by \eqref{eq:predictor-feedback-law} admits a unique global mild solution
\[
(w,z)\in C([\tau,\infty);\mathcal X),
\]
such that
\begin{equation}
w\in L_{\mathrm{loc}}^2([\tau,\infty);V),
\qquad
\dot w\in L_{\mathrm{loc}}^2([\tau,\infty);V').
\label{eq:delayed-solution-additional-regularity}
\end{equation}
Moreover,
\begin{equation}
\|(w(t),z(t,\cdot))\|_{\mathcal X}
\leq
M_D e^{-\frac{\eta}{2}(t-\tau)}
\|(w_\tau,z_\tau)\|_{\mathcal X},
\qquad
t\geq\tau,
\label{eq:delayed-closed-loop-exponential-estimate}
\end{equation}
where \(M_D\geq1\) is independent of \(\tau\), the selected reference trajectory, and the initial data. Hence, the origin is globally and uniformly exponentially stable in \(\mathcal X\).
\end{theorem}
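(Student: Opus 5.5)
The plan is to work entirely in the target coordinates and transfer the result back through the transformation. Given \((w_\tau,z_\tau)\in\mathcal X\), I first compute \(p_\tau\) by Proposition~\ref{prop:predictor-wellposedness} and set \(r_\tau\) by \eqref{eq:initial-transformed-state}, so that \eqref{eq:initial-transformed-state-bound} holds. Proposition~\ref{prop:target-wellposedness} then supplies a unique global mild solution \((w,r)\in C([\tau,\infty);\mathcal X)\) of \eqref{eq:predictor-target-system}, with the regularity \eqref{eq:target-additional-regularity}. For each \(t\geq\tau\) I define \(z(t,\cdot)\) by the inverse transformation \eqref{eq:inverse-predictor-equation}--\eqref{eq:inverse-predictor-transformation}. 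By Proposition~\ref{prop:closed-loop-target-equivalence}, \((w,z)\) is a mild solution of the delayed closed loop on every finite interval, hence globally. Uniqueness follows because any closed-loop solution on \([\tau,T_{\max})\) is mapped by the direct transformation to a target solution with the same initial state \((w_\tau,r_\tau)\). Bijectivity in Proposition~\ref{prop:predictor-transformation-invertibility} then forces it to coincide with the one constructed. Finally, \(w\) is the same function in both coordinates, so \eqref{eq:delayed-solution-additional-regularity} is inherited directly.

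For the decay estimate, I apply Proposition~\ref{prop:nominal-uniform-ISS} to the \(w\)-equation with disturbance \(d(t)=Br(t,0)\), which lies in \(L^2(\tau,\tau+D;H)\) and vanishes afterwards. This gives
\[
\|w(t)\|_H^2\leq e^{-\eta(t-\tau)}\|w_\tau\|_H^2+\frac{\|B\|^2}{\eta}\int_\tau^{\min\{t,\tau+D\}}e^{-\eta(t-\sigma)}\|r(\sigma,0)\|_{\mathbb R^m}^2\,d\sigma .
\]
Bounding \(e^{-\eta(t-\sigma)}\leq e^{\eta D}e^{-\eta(t-\tau)}\) on the integration range and using the trace identity \eqref{eq:target-outflow-trace-identity} yields
\[
\|w(t)\|_H^2\leq e^{-\eta(t-\tau)}\Bigl(\|w_\tau\|_H^2+\tfrac{\|B\|^2e^{\eta D}}{\eta}\|r_\tau\|_{\mathcal Z}^2\Bigr).
\]
For the transport part, \eqref{eq:target-transport-energy} gives \(\|r(t,\cdot)\|_{\mathcal Z}^2\leq\|r_\tau\|_{\mathcal Z}^2\) for \(t<\tau+D\) and \(0\) afterwards. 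Hence \(\|r(t,\cdot)\|_{\mathcal Z}^2\leq e^{\eta D}e^{-\eta(t-\tau)}\|r_\tau\|_{\mathcal Z}^2\) for all \(t\geq\tau\). Combining the two bounds, the target norm satisfies
\[
\|w(t)\|_H^2+\|r(t,\cdot)\|_{\mathcal Z}^2\leq K_De^{-\eta(t-\tau)}\bigl(\|w_\tau\|_H^2+\|r_\tau\|_{\mathcal Z}^2\bigr),
\]
where \(K_D=e^{\eta D}\max\{1,1+\|B\|^2/\eta\}\) does not depend on \(\tau\) or the reference.

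To return to the original coordinates, I use \eqref{eq:inverse-transformation-bound} at time \(t\) and \eqref{eq:initial-transformed-state-bound} at time \(\tau\). This gives \(\|(w(t),z(t,\cdot))\|_{\mathcal X}^2\leq C_{\mathrm i}K_DC_{\mathrm d}e^{-\eta(t-\tau)}\|(w_\tau,z_\tau)\|_{\mathcal X}^2\). Taking square roots yields \eqref{eq:delayed-closed-loop-exponential-estimate} with \(M_D=\max\{1,(C_{\mathrm i}C_{\mathrm d}K_D)^{1/2}\}\). This constant is uniform, because \(C_{\mathrm d}\), \(C_{\mathrm i}\) and \(\eta\) depend only on \(M_{\mathscr A}\), \(D\), \(L\), \(N\) and \(\mu\).

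The step I expect to be delicate is the well-posedness transfer rather than the estimate itself. The inverse transformation must produce \(z\in C([\tau,\infty);\mathcal Z)\), and the reconstructed pair must be a genuine mild solution whose feedback boundary value is \(z(t,D)=\kappa(p(t,D))\). To get continuity, I would use the Lipschitz-type difference estimates for \eqref{eq:inverse-predictor-equation}: Gronwall with \(V\)-weights, as in the uniqueness proofs. These should give continuous dependence of \(p^r(t,\cdot)\) in \(L^2((0,D);H)\) on \((w(t),r(t,\cdot))\), together with continuity of the time shift in \(\mathcal L(t+s)\), which follows from the continuity of \(u_{\mathrm{ref}}\) in \(H^2\). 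The remaining identification is then already covered by Proposition~\ref{prop:closed-loop-target-equivalence}, which I will invoke directly.
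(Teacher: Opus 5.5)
Your proposal is correct and follows the paper's proof essentially step for step. Well-posedness and uniqueness come from the target system through Proposition~\ref{prop:closed-loop-target-equivalence}; the $w$-component is bounded via Proposition~\ref{prop:nominal-uniform-ISS} with the finite-duration forcing $Br(t,0)$ and the trace identity; the transport component is bounded by the envelope $e^{\eta D}e^{-\eta(t-\tau)}$; and the estimate is transferred back with $C_{\mathrm i}C_{\mathrm d}$, giving the same constant $M_D$. Your extra remarks on uniqueness via the direct transformation and on the time continuity of the reconstructed $z$ spell out details that the paper leaves implicit in that equivalence proposition.
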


\begin{proof}
Proposition~\ref{prop:closed-loop-target-equivalence} and the global
well-posedness of the target system yield a unique global mild solution with the stated regularity.

Set
\[
\theta=t-\tau.
\]
The characteristic representation of the transport component gives
\begin{equation}
r(t,s)
=
\begin{cases}
r_\tau(s+\theta),
&
s+\theta<D,
\\
0,
&
s+\theta\geq D,
\end{cases}
\label{eq:shifted-target-transport-solution}
\end{equation}
for almost every \(s\in(0,D)\). Consequently,
\begin{equation}
r(t,\cdot)=0
\quad\text{in }\mathcal Z,
\qquad
t\geq\tau+D,
\label{eq:shifted-finite-time-extinction}
\end{equation}
and
\begin{equation}
\|r(t,\cdot)\|_{\mathcal Z}^2
\leq
e^{\eta D}e^{-\eta\theta}
\|r_\tau\|_{\mathcal Z}^2.
\label{eq:shifted-transport-exponential-envelope}
\end{equation}

Moreover, the outgoing trace satisfies
\[
r(t,0)
=
\begin{cases}
r_\tau(\theta),
&
0<\theta<D,
\\
0,
&
\theta\geq D,
\end{cases}
\]
for almost every \(t>\tau\). Applying Proposition~\ref{prop:nominal-uniform-ISS} to the tracking component with \(d(t)=Br(t,0)\) gives
\begin{align*}
\|w(t)\|_H^2
&\leq
e^{-\eta\theta}\|w_\tau\|_H^2
+
\frac{\|B\|_{\mathcal L(\mathbb R^m,H)}^2}{\eta}
\int_0^{\min\{\theta,D\}}
e^{-\eta(\theta-\rho)}
\|r_\tau(\rho)\|_{\mathbb R^m}^2\,d\rho
\nonumber\\
&\leq
e^{-\eta\theta}
\left(
\|w_\tau\|_H^2
+
\frac{\|B\|_{\mathcal L(\mathbb R^m,H)}^2}{\eta}
e^{\eta D}\|r_\tau\|_{\mathcal Z}^2
\right).
\label{eq:target-tracking-decay}
\end{align*}
Combining this estimate with
\eqref{eq:shifted-transport-exponential-envelope} yields
\begin{equation}
\|w(t)\|_H^2+\|r(t,\cdot)\|_{\mathcal Z}^2
\leq
C_D e^{-\eta(t-\tau)}
\left(
\|w_\tau\|_H^2+\|r_\tau\|_{\mathcal Z}^2
\right),
\label{eq:target-cascade-exponential-estimate}
\end{equation}
where
\begin{equation}
C_D
=
\max
\left\{
1,\,
e^{\eta D}
\left(
1+
\frac{\|B\|_{\mathcal L(\mathbb R^m,H)}^2}{\eta}
\right)
\right\}.
\label{eq:target-cascade-constant}
\end{equation}

The inverse transformation estimate
\eqref{eq:inverse-transformation-bound} and
\eqref{eq:initial-transformed-state-bound} now imply
\begin{align}
\|w(t)\|_H^2+\|z(t,\cdot)\|_{\mathcal Z}^2
&\leq
C_{\mathrm i}C_{\mathrm d}C_D
e^{-\eta(t-\tau)}
\left(
\|w_\tau\|_H^2+\|z_\tau\|_{\mathcal Z}^2
\right).
\label{eq:original-delayed-state-decay}
\end{align}
Taking square roots proves
\eqref{eq:delayed-closed-loop-exponential-estimate} with
\begin{equation}
M_D
=
\sqrt{C_{\mathrm i}C_{\mathrm d}C_D}.
\label{eq:delayed-exponential-constant}
\end{equation}
The uniformity follows from that of
\(C_{\mathrm d}\), \(C_{\mathrm i}\), and the nominal stability
estimate.
\end{proof}

\begin{remark}
\label{rem:classical-initial-compatibility}
No pointwise compatibility condition is required for mild initial data in \(\mathcal X\). For classical solutions, if
\[
z_\tau\in H^1((0,D);\mathbb R^m),
\]
compatibility at the inflow corner \((\tau,D)\) additionally requires
\begin{equation}
z_\tau(D)
=
\kappa\bigl(p_\tau(D)\bigr),
\qquad
\text{equivalently}\qquad
r_\tau(D)=0.
\label{eq:classical-delay-compatibility}
\end{equation}
\end{remark}

\section{Numerical Illustration}
\label{sec:numerical-illustration}

Numerical experiments illustrate the loss of tracking induced by the input delay and the recovery achieved through predictor compensation. The terminal predictor relation and the finite-time extinction of the transformed actuator state are also assessed numerically. The observed tracking decay and actuator behavior are consistent with the mechanisms established in Theorem~\ref{thm:delayed-closed-loop-exponential-stability}.

\subsection{Numerical Setting}
\label{subsec:numerical-setting}

The reference and controlled Kuramoto--Sivashinsky equations are simulated on
\[
\mathbb T_L=\mathbb R/(L\mathbb Z),
\qquad L=22,
\]
over \(0\leq t\leq T\), with
\begin{equation}
T=40,\qquad D=1,\qquad N=2,\qquad \mu=8.
\label{eq:numerical-parameters}
\end{equation}
Thus, the feedback acts on the first two nonzero Fourier wave numbers.

A Fourier pseudospectral discretization with \(N_x=128\) equidistant nodes is employed. Spatial derivatives are evaluated in Fourier space, whereas the quadratic term is computed in physical space and filtered according to the \(2/3\) de-aliasing rule. Time integration is performed by the fourth-order exponential time-differencing Runge--Kutta method with \(\Delta t=10^{-2}\). The zero Fourier coefficient is reset after every time update to eliminate roundoff-induced drift from the zero-mean phase space.

Let \(q=2\pi/L\). The initial reference state and tracking error are prescribed by
\begin{equation}
\begin{aligned}
u_{\mathrm{ref}}(0,x)
&=
0.55\cos(qx)
-0.30\sin(2qx)
+0.16\cos(3qx+0.2),
\\
w_0(x)
&=
0.22\cos(qx+0.4)
-0.11\sin(2qx-0.3)
+0.05\cos(4qx),
\end{aligned}
\label{eq:numerical-initial-data}
\end{equation}
and \(u(0,\cdot)=u_{\mathrm{ref}}(0,\cdot)+w_0\).
Both initial states have zero spatial mean, while \(w_0\not\equiv0\) introduces a nontrivial tracking mismatch. The reference solution is generated once by integrating the uncontrolled KS equation and is then used in all three configurations.

For \(N=2\), the actuator space has dimension \(m=4\), and
\[
B\xi
=
\sqrt{\frac{2}{L}}
\sum_{j=1}^{2}
\left[
\xi_{2j-1}\cos(jqx)
+
\xi_{2j}\sin(jqx)
\right],
\qquad \xi\in\mathbb R^4.
\]
The actuator profiles form an orthonormal family, so that \(BB^\ast=P_2\).
Consequently, the feedback map \(\kappa(v)=-\mu B^\ast v\) realizes
\(B\kappa(v)=-\mu P_2v\), in agreement with the low-mode damping term used in the stability analysis.

The delay interval is discretized with
\[
\Delta s=\Delta t=10^{-2},
\qquad
N_D=\frac{D}{\Delta s}=100.
\]
The relation \(D=N_D\Delta t\) allows the transport state to be advanced by a discrete characteristic shift without interpolation. At each time level, the predictor equation is integrated on \([0,D]\) from \(p(t,0)=w(t)\), and the command is evaluated as
\begin{equation}
U(t)=-\mu B^\ast p(t,D).
\label{eq:numerical-predictor-control}
\end{equation}
The control prehistory is set to zero on \([-D,0)\), equivalently
\(z(0,\cdot)=0\).

Three configurations are compared: the uncontrolled dynamics, the
uncompensated delayed feedback \(U(t)=-\mu B^\ast w(t)\), whose command reaches the plant one delay interval later, and the predictor-compensated feedback \eqref{eq:numerical-predictor-control}. All three simulations use the same spatial discretization, time step, initial data, and feedback parameters.

\subsection{Tracking and Predictor Performance}
\label{subsec:numerical-performance}

Define the normalized tracking error by
\begin{equation}
E(t)=\frac{\|w(t)\|_{L^2}}{\|w_0\|_{L^2}}.
\label{eq:normalized-tracking-error}
\end{equation}
Figure~\ref{fig:tracking-and-control}\subref{fig:tracking-comparison} shows three qualitatively different tracking regimes. At \(T=40\),
\begin{equation}
\begin{aligned}
E_{\mathrm{ol}}(T)&=1.1162\times10^{1},&
E_{\mathrm{del}}(T)&=4.3974\times10^{2},&
E_{\mathrm{pred}}(T)&=2.1111\times10^{-2}.
\end{aligned}
\label{eq:numerical-final-errors}
\end{equation}

The open-loop increase reflects the sensitivity of the KS dynamics to the initial mismatch: the reference and controlled solutions progressively lose phase coherence, although each trajectory may remain bounded. For the uncompensated delayed feedback, the command is computed from an error that no longer represents the state encountered when the command reaches the plant.
The resulting temporal misalignment can reverse the intended low-mode damping effect and strongly amplify the tracking error over the simulated interval.

Predictor compensation restores the alignment between the information used to construct the command and the future state on which that command acts. The final error is approximately \(2.1\%\) of its initial value. The predictor neither removes nor shortens the physical actuator delay; it anticipates the evolution during signal propagation and recovers the dissipative effect of the
nominal low-mode feedback.

A least-squares fit of \(\log E_{\mathrm{pred}}(t)\) over
\(D+1\leq t\leq 0.65T\) gives
\begin{equation}
\widehat\lambda_{\mathrm{pred}}=0.1198.
\label{eq:empirical-decay-rate}
\end{equation}
The resulting trajectory-dependent, finite-time estimate is distinct from the uniform analytical decay rate established in
Theorem~\ref{thm:delayed-closed-loop-exponential-stability}.

Figure~\ref{fig:tracking-and-control}\subref{fig:control-input} displays the norms of the generated command and of the delayed command reaching the plant.
Their shift by \(D=1\) makes the propagation time through the actuator explicit. The large initial command results from the prescribed mismatch and the initially empty delay line associated with the zero control prehistory.
Its subsequent decrease indicates that progressively less corrective effort is required through the actuated modes as synchronization develops. The remaining oscillations are consistent with tracking a time-dependent trajectory: the controller continues to correct phase and amplitude discrepancies rather than maintaining a stationary state.

\begin{figure}[!t]
\centering
\subfloat[Normalized tracking errors.
\label{fig:tracking-comparison}]
{\includegraphics[width=0.48\textwidth]
{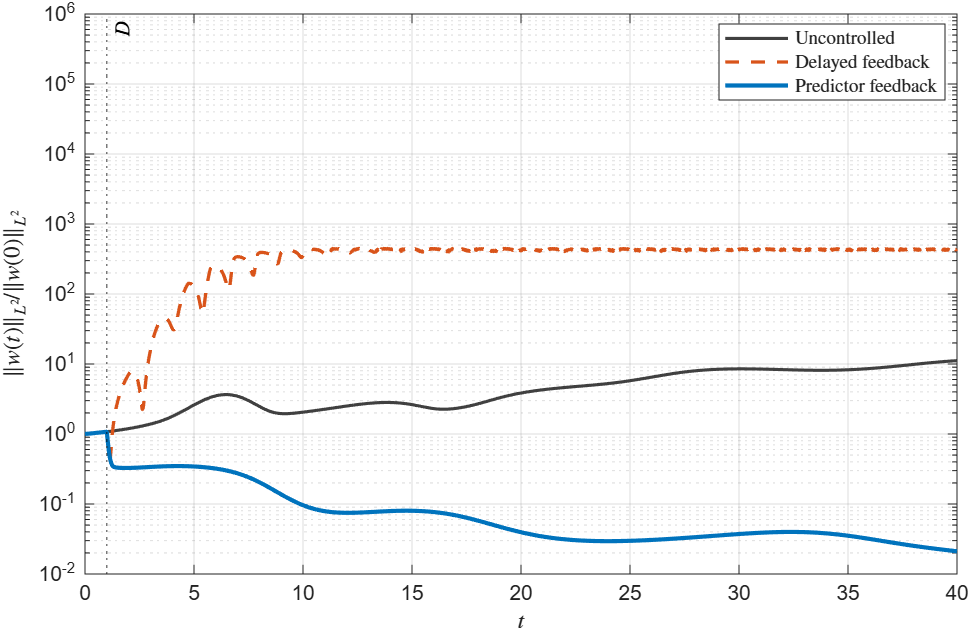}}
\hfill
\subfloat[Generated and applied control norms.
\label{fig:control-input}]
{\includegraphics[width=0.48\textwidth]
{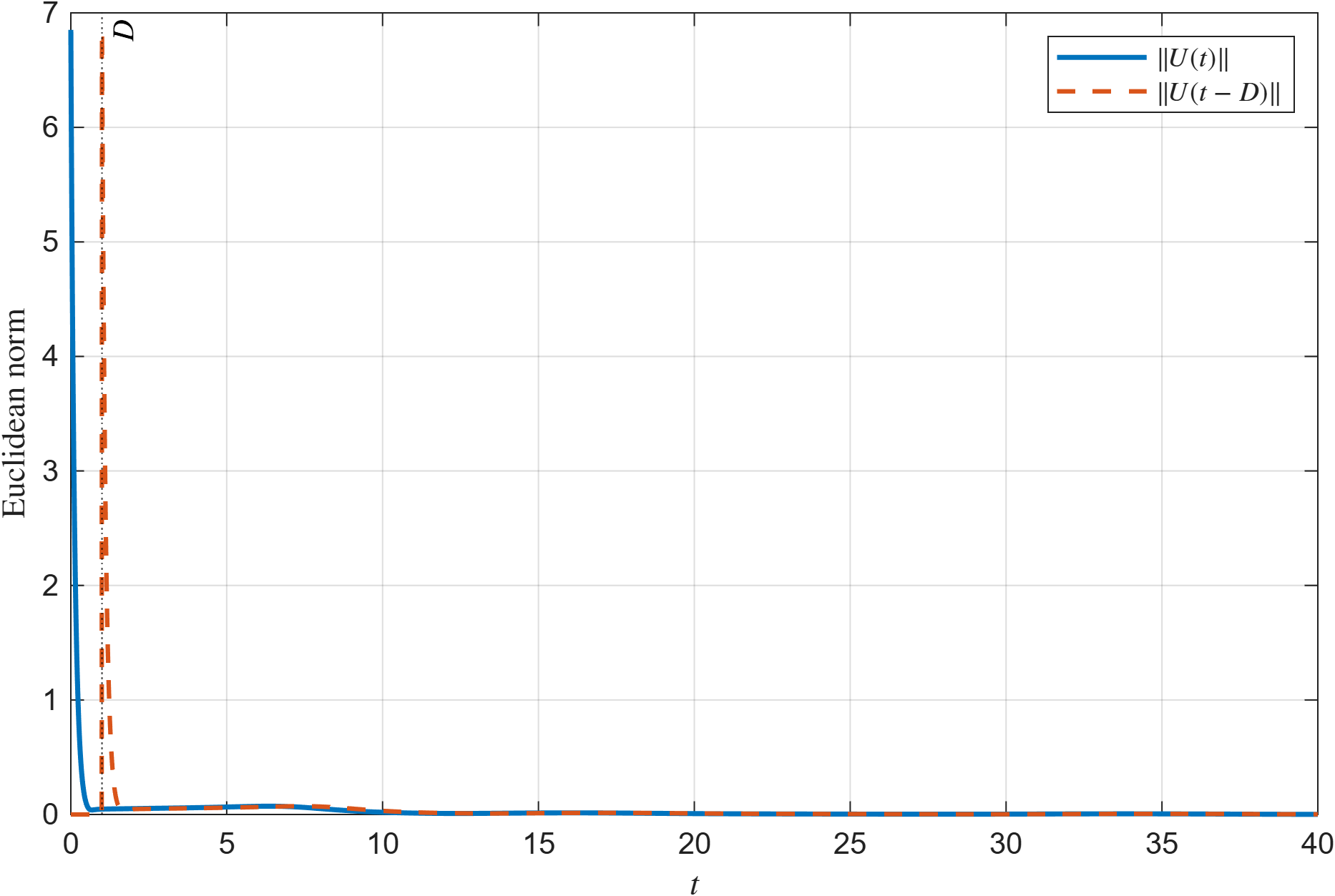}}
\caption{Tracking errors and control effort for the three configurations.}
\label{fig:tracking-and-control}
\end{figure}

The space--time field in
Fig.~\ref{fig:error-and-diagnostics}\subref{fig:error-field} shows that error attenuation is not pointwise monotone. During the transient, spatial error structures are transported, deformed, and redistributed by the nonlinear convective dynamics before being dissipated. Such transient dynamics reflect the coexistence of long-wave instability, nonlinear mode coupling, and fourth-order dissipation in the KS equation.

The feedback directly damps only the first two nonzero wave numbers. The remaining components are attenuated through the combined effect of nonlinear mode coupling, direct low-mode damping, and intrinsic fourth-order dissipation at high wave numbers. The decay over the whole domain therefore represents
synchronization of two evolving spatial patterns rather than relaxation toward a fixed profile.

The delay-compensation mechanism is assessed through the terminal predictor residual. At each recorded time, the predictor is recomputed using \(\Delta s_{\mathrm{aux}}=\Delta s/2\). Denoting this auxiliary predictor by \(p_{\mathrm{aux}}\), define
\begin{equation}
\varepsilon_p(t)
=
\left\|
p_{\mathrm{aux}}(t,D)-w_{\mathrm{pred}}(t+D)
\right\|_{L^2},
\qquad 0\leq t\leq T-D.
\label{eq:predictor-consistency-residual}
\end{equation}
The computation yields
\begin{equation}
\max_{0\leq t\leq T-D}\varepsilon_p(t)
=
7.8904\times10^{-10}.
\label{eq:max-predictor-residual}
\end{equation}
The two terms entering the predictor residual are obtained through separate numerical integrations. Hence, the result measures the numerical consistency of the terminal relation \(p(t,D)=w_{\mathrm{pred}}(t+D)\), rather than the
satisfaction of an imposed algebraic identity. Its small magnitude indicates that the predictor reconstructs, to numerical accuracy, the tracking error present when the command generated at time \(t\) reaches the plant.

The transformed actuator state is reconstructed as
\[
r_{\mathrm{num}}(t,s)
=
z_{\mathrm{num}}(t,s)
-
\kappa\bigl(p_{\mathrm{num}}(t,s)\bigr).
\]
Its norm satisfies
\begin{equation}
\max_{D+\Delta t\leq t\leq T}
\|r_{\mathrm{num}}(t,\cdot)\|_{\mathcal Z}
=
8.4001\times10^{-14}.
\label{eq:numerical-extinction}
\end{equation}
After one delay horizon, the transformed state is therefore reduced to roundoff level, consistently with
\eqref{eq:transport-finite-time-extinction}. Physically, the initial content of the actuator line is transported toward the plant and leaves the delay channel after the travel time \(D\). The predictor-compatible boundary command then determines the actuator state, leading to the numerical extinction of the transformed variable. Both diagnostics appear in Fig.~\ref{fig:error-and-diagnostics}\subref{fig:predictor-checks}.

\begin{figure}[!t]
\centering
\subfloat[Predictor-controlled tracking error.
\label{fig:error-field}]
{\includegraphics[width=0.48\textwidth]
{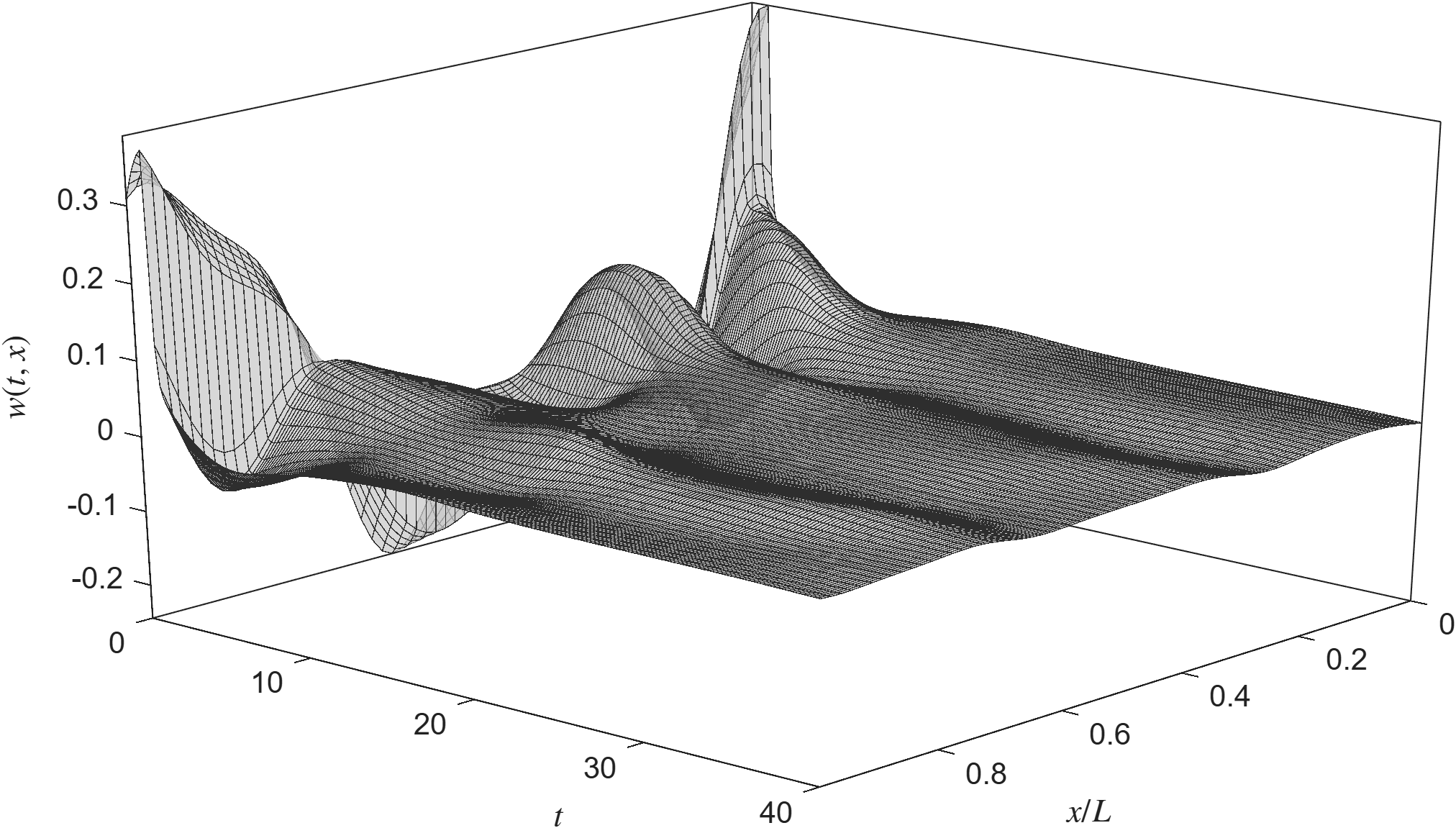}}
\hfill
\subfloat[Predictor and transport diagnostics.
\label{fig:predictor-checks}]
{\includegraphics[width=0.48\textwidth]
{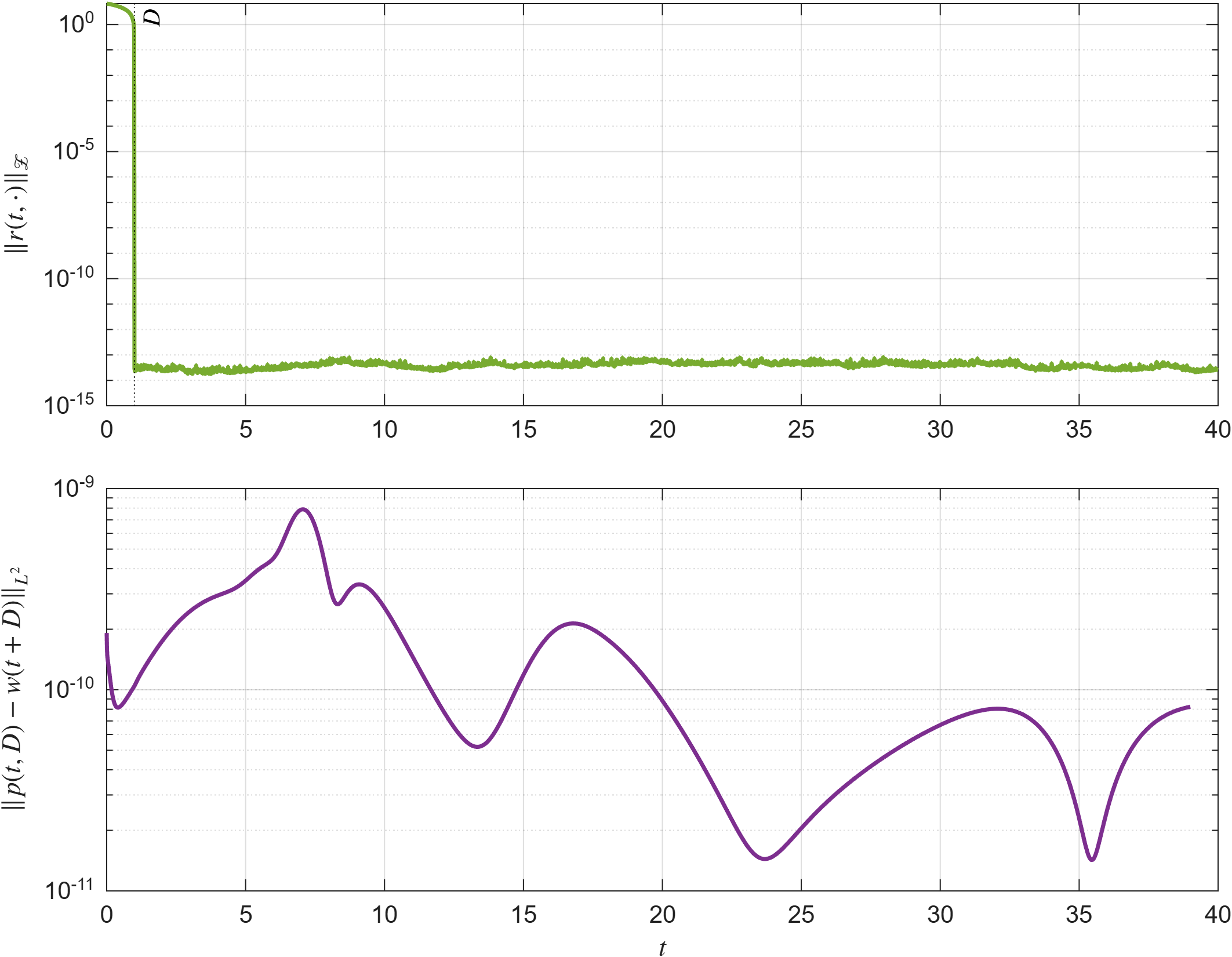}}
\caption{Space--time tracking error and diagnostics associated with the
predictor transformation.}
\label{fig:error-and-diagnostics}
\end{figure}

Figure~\ref{fig:state-comparison} represents the synchronization process in physical space. Initially, the reference and controlled profiles differ in phase and amplitude. During the intermediate window, their dominant large-scale structures become aligned through the direct damping of the first two Fourier wave numbers. Residual discrepancies are subsequently attenuated through nonlinear mode coupling and the intrinsic dissipation of the higher
wave numbers.

Over the final window, the two profiles are nearly indistinguishable at the plotted scale, while their peaks and troughs continue to move and deform. The controller therefore preserves the intrinsic spatiotemporal dynamics instead of driving the system toward an artificial equilibrium. Its physical role is
to synchronize the phase and amplitude of the controlled pattern with the prescribed nonstationary reference trajectory.

\begin{figure}[!t]
\centering
\subfloat[\(t\in\lbrack 0,2\rbrack\).
\label{fig:state-comparison-initial}]
{\includegraphics[width=0.48\textwidth]
{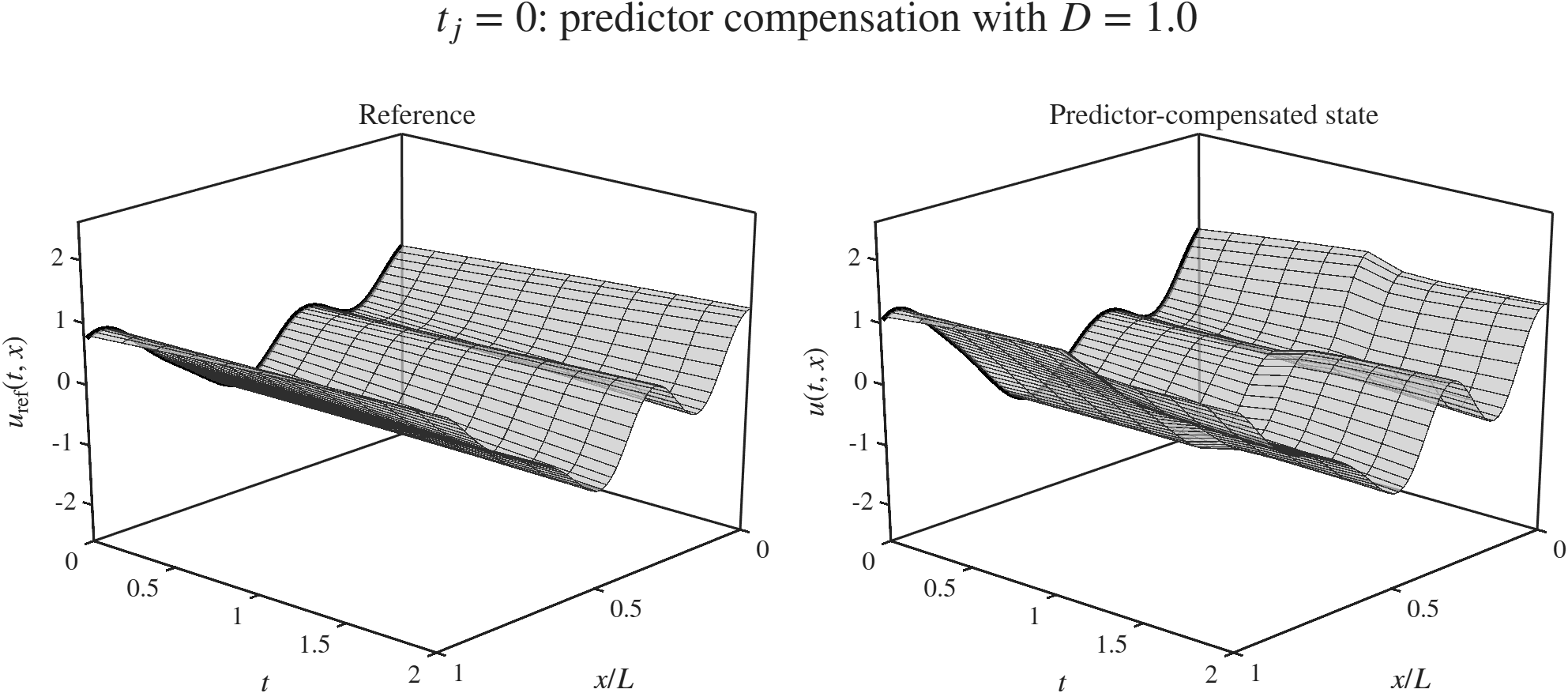}}
\hfill
\subfloat[\(t\in\lbrack 19,21\rbrack\).
\label{fig:state-comparison-intermediate}]
{\includegraphics[width=0.48\textwidth]
{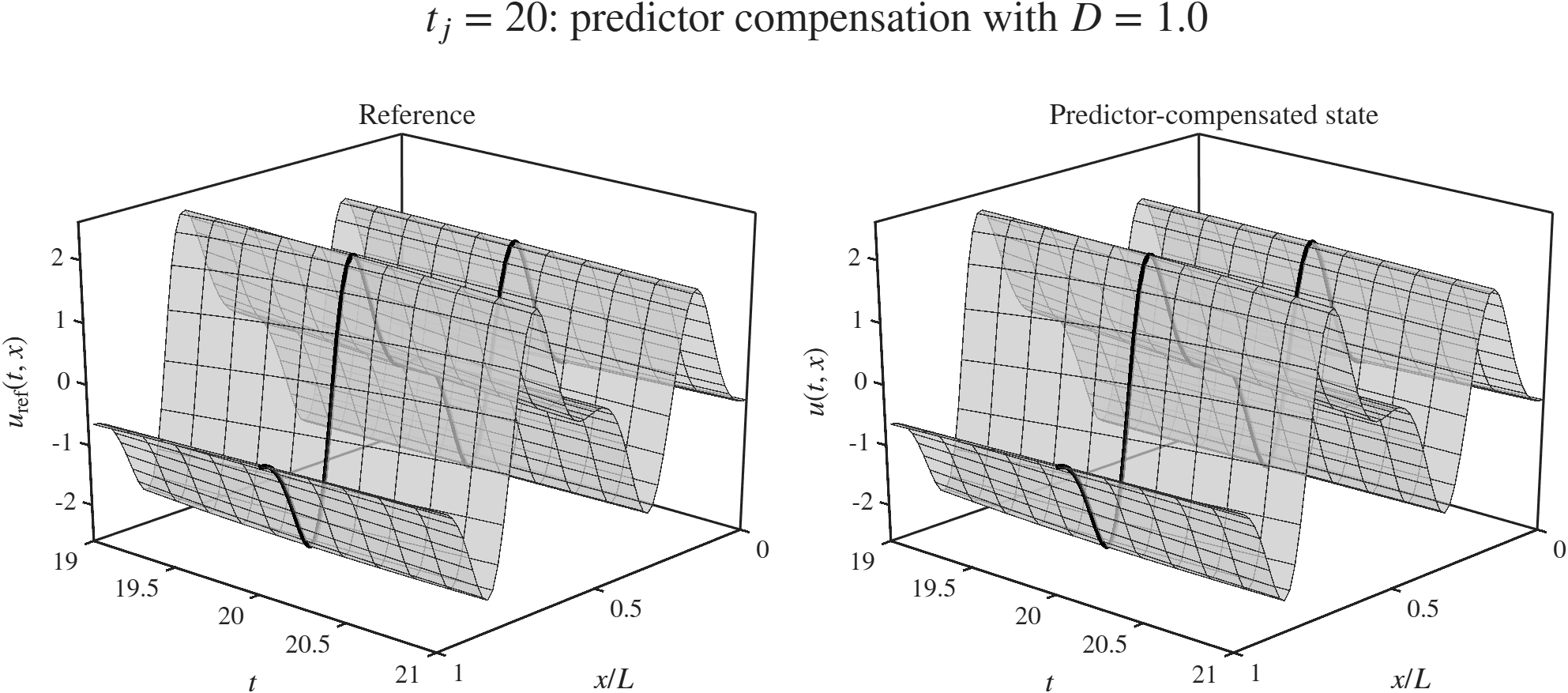}}
\hfill
\subfloat[\(t\in\lbrack 38,40\rbrack\).
\label{fig:state-comparison-final}]
{\includegraphics[width=0.48\textwidth]
{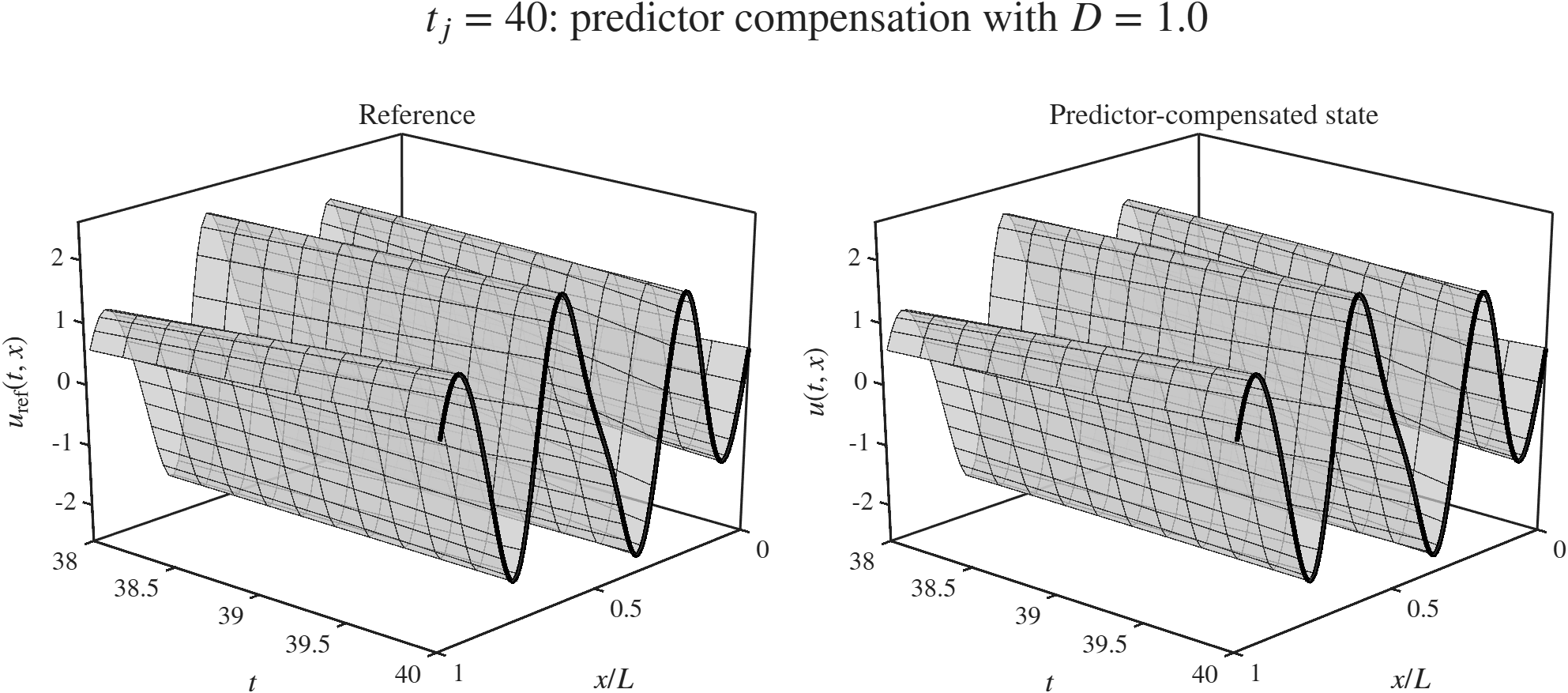}}
\caption{Reference and predictor-controlled states over the initial,
intermediate, and final temporal windows.}
\label{fig:state-comparison}
\end{figure}

Refining the discretization from
\[
(N_x,\Delta t,\Delta s)=(128,10^{-2},10^{-2})
\]
to
\[
(N_x,\Delta t,\Delta s)=(256,5\times10^{-3},5\times10^{-3})
\]
changes \(E_{\mathrm{pred}}(T)\) from \(2.111135\times10^{-2}\) to
\(2.113984\times10^{-2}\), a relative variation of approximately
\(1.35\times10^{-3}\), while the fitted rate changes only from \(0.119830\)
to \(0.119831\). Moreover,
\[
\max_t\left|\overline{u}_{\mathrm{ref}}(t)\right|
=6.5919\times10^{-17},
\qquad
\max_t\left|\overline{u}_{\mathrm{pred}}(t)\right|
=3.9085\times10^{-17},
\]
where the overbar denotes the spatial mean. The tracking behavior and empirical decay rate are therefore insensitive to the considered refinement, while the zero-mean constraint is preserved to machine precision.

\section{Conclusion}
\label{sec:conclusion}

A known constant actuator delay can be accommodated without losing uniform exponential tracking of complete reference trajectories on the Kuramoto--Sivashinsky global attractor. Uniformity with respect to the initial time and the selected reference trajectory is the main consequence of the analysis. In the numerical configurations considered, delayed feedback without compensation significantly degrades the tracking performance, whereas predictor compensation recovers sustained decay of the tracking error. The present design assumes exact knowledge of the constant delay and availability of the full state. Extending the same uniform tracking result to delay mismatch, time-varying or unknown delays, and output-based feedback remains to be investigated.

\end{document}